\documentclass[reqno]{amsart}
\usepackage{amsmath,amsfonts,amssymb,mathrsfs,amsthm}
\usepackage{pifont}
\usepackage[dvipsnames]{xcolor}
\usepackage{hyperref}
\hypersetup{
	colorlinks=true,
	linkcolor=blue,
	citecolor=blue,
	urlcolor=blue
}

\newtheorem{theorem}{Theorem}[section]
\newtheorem{lemma}[theorem]{Lemma}
\newtheorem{proposition}[theorem]{Proposition}
\newtheorem{corollary}[theorem]{Corollary}
\newtheorem{conjecture}[theorem]{Conjecture}

\theoremstyle{definition}
\newtheorem{definition}[theorem]{Definition}

\theoremstyle{remark}
\newtheorem{remark}[theorem]{Remark}

\newtheorem*{maintheorem1}{Theorem 1}
\newtheorem*{introconjA}{Conjecture 1}

\numberwithin{equation}{section}

\newcommand{\g}{\mathfrak{g}}
\newcommand{\B}{\mathcal{B}}
\newcommand{\F}{\mathcal{F}}
\newcommand{\Ext}{\operatorname{Ext}}
\newcommand{\Hom}{\operatorname{Hom}}
\newcommand{\wt}{\operatorname{wt}}

\newcommand{\Tfix}{\mathbb{T}}
\newcommand{\Gl}{\Gamma_\lambda}
\newcommand{\wo}{w_\circ}
\newcommand{\te}{\tilde e}
\newcommand{\tf}{\tilde f}
\newcommand{\chke}{\check e}
\newcommand{\bstar}{b_\star}
\newcommand{\Bl}{\B(\lambda)}
\newcommand{\dom}{\trianglerighteq}
\newcommand{\shape}{\operatorname{sh}}
\newcommand{\rw}{\operatorname{rw}}

\title[Maximal proper extremal subset of a crystal basis]
{Maximal proper extremal subset in the crystal basis of a finite-dimensional irreducible module}

\author{You ZHOU}
\date{}

\begin{document}
\begin{abstract}
    The notion of extremal subsets was introduced by Assaf, Dranowski, and Gonz\'alez as a natural generalization of Demazure crystals. 
    In this paper, we show that the crystal basis of a finite-dimensional irreducible module over a quantum group of finite type has a unique maximal proper extremal subset, whose complement is the set of elements that generate the whole crystal basis as an extremal subset.
	We conjecture that this complement consists of all nonzero elements obtained by successively applying the lowering Kashiwara operators to a distinguished element,
    which is constructed from the lowest element of the crystal basis.
	We prove this conjecture in type $A$.
\end{abstract}
	\maketitle

   \section{Introduction}

 In \cite{kashiwara1993}, Kashiwara proved that
the crystal basis of a Demazure module (in an integrable highest weight module)
is realized as a subset, called a Demazure crystal, of the crystal basis of
the integrable highest weight module,
and that Demazure crystals satisfy the string property;
see also Joseph \cite{joseph2003}.
Assaf, Dranowski, and Gonz\'alez \cite{assaf2023} introduced extremal subsets as a natural generalization of Demazure crystals;
a subset of a crystal basis is called an extremal subset if it satisfies the string property.
They used extremal subsets to determine whether the tensor product of two Demazure crystals decomposes
into a disjoint union of Demazure crystals.
Recently, Assaf and Gonz\'alez \cite{assafgonzalez2025} characterized unions of
Demazure crystals as extremal subsets satisfying
certain additional local conditions, and showed that such unions
decompose into Demazure atoms,
and have the same characters as dual Polo modules.

Extremal subsets of the crystal basis $\Bl$ of dominant highest weight $\lambda$ are closed under unions and intersections \cite{armon2023}.
In particular, for $b\in\Bl$, the intersection of all extremal subsets containing $b$ is the smallest extremal subset $\langle b\rangle_{\Ext}$ containing $b$.
Moreover, since the lowest element $b_{\wo\lambda}\in \Bl$ satisfies $\langle b_{\wo\lambda}\rangle_{\Ext}=\Bl$,
the union $E_{\max}$ of all proper extremal subsets of $\Bl$ is the unique maximal proper extremal subset of $\Bl$.
In this paper, we conjecture that $\Bl\setminus E_{\max}$ consists of all nonzero elements
obtained by successively applying the lowering Kashiwara operators to a distinguished element,
and we prove this conjecture in type $A$.

Let us explain our main result more precisely.
Let $\g$ be a finite-dimensional simple Lie algebra with $I$ the index set of its simple roots,
and $U_q(\g)$ the quantum group associated to $\g$.
Denote by $\Bl$ the crystal basis of the finite-dimensional irreducible $U_q(\g)$-module of dominant highest weight $\lambda$,
with $\te_i,\tf_i$ ($i\in I$) the Kashiwara operators.
For $b\in\Bl$ and $i\in I$, the $i$-string of $b$ is defined to be
\[
S^i(b):=\{\te_i^{\max}(b),\dots,\te_i(b),b,\tf_i(b),\dots,\tf_i^{\max}(b)\}\setminus\{0\},
\]
where $\te_i^{\max}(b):=\te_i^{k}(b)$ for the largest $k\ge0$ with $\te_i^k(b)\ne0$,
and $\tf_i^{\max}(b):=\tf_i^{k}(b)$ for the largest $k\ge0$ with $\tf_i^k(b)\ne0$.
A subset $E\subseteq\Bl$ is called an extremal subset if
$E\cap S^i(b)\in\{\varnothing,\{\te_i^{\max}(b)\},S^i(b)\}$
for all $b\in\Bl$ and $i\in I$.
Set $\Gl:=\{c\in\Bl\mid \langle c\rangle_{\Ext}=\Bl\}$.
We prove in Proposition~\ref{prop:max-proper-extremal} that $E_{\max}=\Bl\setminus\Gl$.
Thus we can describe $E_{\max}$ in terms of $\Gl$.

To describe $\Gl$, we introduce the \emph{checked operators} $\chke_i$ ($i\in I$) on $\Bl$ as follows:
\[
\chke_i(b):=
\begin{cases}
	\tf_i\te_i^{\max}(b) & \text{if } \te_i(b)\ne0,\\
	b & \text{if } \te_i(b)=0.
\end{cases}
\]
Let $w_\circ=s_{i_1}\cdots s_{i_\ell}$ be a reduced expression of the longest element $w_\circ$
of the Weyl group $W$, and set $\chke_{w_\circ}:=\chke_{i_1}\cdots\chke_{i_\ell}$;
the operator $\chke_{w_\circ}$ is independent of the choice of a reduced expression of $w_\circ$ (see Definition~\ref{def:chkw}).
For $b\in\Bl$, we define
\[
\mathcal{C}_b:=\{c\in\Bl\mid \chke_{w_\circ}(c)=\chke_{w_\circ}(b)\}.
\]
Set $\bstar:=\chke_{w_\circ}(b_{\wo\lambda})$, and $\F(\bstar):=\{\tf_{j_1}\cdots\tf_{j_s}(\bstar)\mid s\ge0, j_1,\dots,j_s\in I\}\setminus\{0\}$.

\begin{introconjA}[Conjecture~\ref{conj:main1-all-types}]
	We have $\Gl=\F(\bstar)=\mathcal{C}_{\bstar}$.
\end{introconjA}

\begin{maintheorem1}[Theorem~\ref{thm:main1}]
	Conjecture~1 is true if $\g$ is of type $A$.
\end{maintheorem1}

In Appendix~\ref{sec:applications}, we explore further properties of $\mathcal{C}_b$, $b\in\Bl$.
In Proposition~\ref{prop:decomp-into-classes},
we show that every Demazure atom and every
extremal subset in $\Bl$ decomposes into disjoint unions of
$\mathcal{C}_{b}$'s for $b \in B(\lambda)$;
in particular, this decomposition refines the
decomposition of a Demazure crystal into Demazure atoms.
Let $\rho$ denote the sum of all fundamental weights,
and $b_\rho$ the highest element of $\B(\rho)$.
We also show that $b_\rho\otimes\mathcal{C}_b$ is ``isomorphic'' to a Demazure crystal
in the sense of Proposition~\ref{prop:brho-C_b}.

This paper is organized as follows.
In Section~\ref{sec:preliminaries}, we fix our notation.
In Section~\ref{sec:extremal-subsets}, we recall extremal subsets and introduce the checked operators.
In Section~\ref{sec:main-theorems}, we state Conjecture~1 and Theorem~1.
In Section~\ref{sec:plactic}, we review the plactic monoid.
In Section~\ref{sec:greene}, we prove a combinatorial result on the Greene invariants of words,
which is used in Section~\ref{sec:proof-thm1} to prove Theorem~1.
In Appendix~\ref{sec:applications}, we give further properties of the subsets $\mathcal{C}_b$ for $b\in\Bl$.

%%%%%%%%%%%%%%%%%%%%%%%%%%%%%%%%%%%%%%%%%%%%%%%%%%%%%%%%%%%%%%%%
	\section{Basic notation}\label{sec:preliminaries}
	%%%%%%%%%%%%%%%%%%%%%%%%%%%%%%%%%%%%%%%%%%%%%%%%%%%%%%%%%%%%%%%%
	
	Let $\mathfrak{g}$ be a finite-dimensional simple Lie algebra over $\mathbb{C}$, 
	and $\mathfrak{h}$ a Cartan subalgebra of $\mathfrak{g}$.
	Set $\mathfrak{h}^*:=\Hom_\mathbb{C}\left(\mathfrak{h},\mathbb{C}\right)$, 
	and denote by $\langle \cdot,\cdot\rangle:\mathfrak{h}^\ast\times\mathfrak{h}\to\mathbb{C}$ the canonical pairing.
	Let
	$\Delta=\{\alpha_i\}_{i\in I}\subset\mathfrak{h}^*$ be the set of simple roots, and
	$\Delta^\lor=\{\alpha_i^\lor\}_{i\in I}\subset\mathfrak{h}$ the set of simple coroots.
	We set $Q:=\bigoplus_{i\in I}\mathbb{Z}\alpha_i$ and $Q^+:=\sum_{i\in I}\mathbb{Z}_{\ge 0}\alpha_i$. 
	Let $P=\bigoplus_{i\in I}\mathbb{Z}\varpi_i\subseteq\mathfrak{h}^*$ be the weight lattice of $\mathfrak{g}$, where $\varpi_i$ ($i\in I$) are the fundamental weights,
	and let $P^+=\sum_{i\in I}\mathbb{Z}_{\ge 0}\varpi_i\subset P$ be the set of dominant integral weights.
	We set $\rho:=\sum_{i\in I}\varpi_i\in P^+$.
	For $\mu,\nu\in P$,
	we write $\mu\ge\nu$ if $\mu-\nu\in Q^+$;
	we call this partial order the \emph{dominance order} on $P$.
	Let $W=\langle s_i \mid{i\in I}\;\rangle$ be the Weyl group of $\g$, where $s_i$ is the simple reflection in $\alpha_i$.
	Denote by $w_\circ$ the longest element of $W$.
	
	Let $U_q(\g)$ be the quantized enveloping algebra of $\g$ over $\mathbb{C}(q)$
	(the quantum group). For $\lambda \in P^{+}$, let $V(\lambda) = \bigoplus_{\mu \in P} V(\lambda)_\mu$ denote the (finite-dimensional)
	irreducible highest weight $U_q(\g)$-module of highest weight $\lambda$,
	where $V(\lambda)_\mu$ denotes the weight space of weight $\mu$.
	
	A \emph{crystal} is a set $\B$ together with maps
	$\wt:\B\to P$, the \emph{Kashiwara operators}
	$\te_i,\tf_i:\B\to\B\sqcup\{0\}$, and
	$\varepsilon_i,\varphi_i:\B\to\mathbb{Z}\cup\{-\infty\}$ $(i\in I)$
	satisfying certain conditions;
	see \cite[Definition 4.5.1]{hongkong}.
	All crystals in this paper are seminormal, that is,
	$\varepsilon_i(b)=\max\{k\ge0\mid\te_i^k(b)\ne0\}$ and
	$\varphi_i(b)=\max\{k\ge0\mid\tf_i^k(b)\ne0\}$ for $b\in\B$ and $i\in I$;
	in this case, we define $
	\tilde{e}_i^{\max}(b) := \tilde{e}_i^{\,\varepsilon_i(b)}(b)$ and $
	\tilde{f}_i^{\max}(b) := \tilde{f}_i^{\,\varphi_i(b)}(b)$
	for $b\in \B$ and $i\in I$.
	The \emph{crystal graph} of $\B$ is the $I$-labeled directed graph whose vertex set is $\B$ and whose edges are given as follows:
	for $b,b'\in\B$ and $i\in I$,
	we have an edge $b\xrightarrow{\,i\,}b'$ if $\tf_i(b)=b'$;
	a \emph{connected
		component} of $\B$ means the vertex set of a connected component of this graph.
	For a crystal $\B$ and $x\in\B$, we denote by $C(x)$ the connected component of $\B$
	containing $x$.
	For crystals $\B$ and $\B'$,
	we endow the set $\B\otimes\B':=\B\times\B'$ with a crystal structure as in \cite[\S 4.4]{hongkong},
	where we write $x\otimes y$ for $(x,y)\in\B\times\B'$.
	
	An element $b$ of a crystal $\B$ is called a \emph{highest element} (resp., \emph{lowest element}) of $\B$
	if $\te_i(b)=0$ (resp., $\tf_i(b)=0$) for all $i\in I$.

	For $\lambda\in P^+$,
	let $\Bl$ be the crystal basis of the $U_q(\g)$-module $V(\lambda)$,
	and $b_\lambda\in\Bl$ (resp., $b_{w_\circ\lambda}\in\Bl$) the highest (resp., lowest) element of weight $\lambda$ (resp., $w_\circ\lambda$).

	\begin{definition}[$i$-string]\label{def:i-string}
		For $b\in\Bl$ and $i\in I$, the \emph{$i$-string} of $b$ is the subset
		\[
		S^i(b):=\{\te_i^{\max}(b),\dots,\te_i(b),b,\tf_i(b),\dots,\tf_i^{\max}(b)\}\setminus\{0\}.
		\]
	\end{definition}
	
	Let $w\in W$.
	If $w=s_{i_1}\cdots s_{i_\ell}$ is a reduced expression of $w$, 
	then we set $\te_{w}^{\max}:=\te_{i_1}^{\max}\cdots\te_{i_\ell}^{\max}$.
	We deduce the following proposition from \cite[\S 7.2]{kashiwara1994}.
	
	\begin{proposition}\label{prop:eimax-indep}
		For $w\in W$, the operator $\te_{w}^{\max}$ is independent of the choice of a
		reduced expression of $w$.
		Also, we have $\te_{w_\circ}^{\max}\B(\lambda)=\{b_\lambda\}$.
	\end{proposition}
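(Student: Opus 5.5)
The plan is to deduce both assertions from Kashiwara's theory of Demazure crystals \cite{kashiwara1993,kashiwara1994}. The key tool is the Demazure crystal $\B_w(\lambda)\subseteq\Bl$, defined by $\B_e(\lambda)=\{b_\lambda\}$ and $\B_{w}(\lambda)=\bigcup_{k\ge0}\tf_i^k\B_{s_iw}(\lambda)\setminus\{0\}$ whenever $s_iw>s_iw\cdot$ is replaced by the condition $\ell(s_iw)<\ell(w)$. I will use three standard facts about these subsets:
\begin{itemize}
\item[(1)] each $\B_w(\lambda)$ depends only on $w$;
\item[(2)] $\B_w(\lambda)$ satisfies the string property, i.e.\ for every $i$ and every $b$, the intersection $\B_w(\lambda)\cap S^i(b)$ is empty, equal to $\{\te_i^{\max}(b)\}$, or equal to all of $S^i(b)$;
\item[(3)] $\te_i^{\max}$ maps $\B_w(\lambda)$ onto $\B_{s_iw}(\lambda)$ whenever $\ell(s_iw)<\ell(w)$.
\end{itemize}

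For the independence of $\te^{\max}_w$, I proceed via the braid relations. By Matsumoto's theorem, any two reduced expressions of $w$ are connected by braid moves. It therefore suffices to show that for $i\ne j$ with $m=m_{ij}$ one has
\[
\te_i^{\max}\te_j^{\max}\te_i^{\max}\cdots=\te_j^{\max}\te_i^{\max}\te_j^{\max}\cdots
\]
as maps $\Bl\to\Bl$, with $m$ factors on each side. Restricting to the Levi subalgebra of rank two spanned by $i,j$, every element of $\Bl$ lies in a connected component isomorphic to a rank-two crystal $\B(\mu)$ for a dominant rank-two weight $\mu$. The composite $\te^{\max}$ along a reduced word of the longest element of the rank-two Weyl group sends every element of $\B(\mu)$ to the highest element. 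This holds because each step $\te_k^{\max}$ applied to an element of $\B_v(\mu)$ lands in $\B_{s_kv}(\mu)$ by (3). After all $m$ steps we are in $\B_e(\mu)=\{b_\mu\}$, starting from $\B_{w_\circ}(\mu)=\B(\mu)$. Both sides of the braid relation therefore equal the map sending an element to the highest element of its $\{i,j\}$-component, and so they agree.

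For the second assertion, I apply (3) repeatedly along a reduced expression $w_\circ=s_{i_1}\cdots s_{i_\ell}$. Starting from $\B_{w_\circ}(\lambda)=\Bl$, the application of $\te^{\max}_{i_\ell}$ lands in $\B_{s_{i_1}\cdots s_{i_{\ell-1}}}(\lambda)$, and so on, until one reaches $\B_e(\lambda)=\{b_\lambda\}$. Hence $\te_{w_\circ}^{\max}\Bl=\{b_\lambda\}$.

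The main obstacle is fact (3), specifically that $\te_i^{\max}\B_w(\lambda)\subseteq\B_{s_iw}(\lambda)$. My approach is to take $b=\tf_i^k b'$ with $b'\in\B_{s_iw}(\lambda)$, which is possible by the recursive definition. Then $\te_i^{\max}b=\te_i^{\max}b'$. Moreover, $\B_{s_iw}(\lambda)$ is stable under $\te_i$, since every element of $\B_{s_iw}(\lambda)$ is obtained through $\te_i$-stable strings when $\ell(s_is_iw)>\ell(s_iw)$; this is Kashiwara's string property in its $\te$-stable form. It follows that $\te_i^{\max}b'\in\B_{s_iw}(\lambda)$, which gives the inclusion.
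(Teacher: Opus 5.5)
Your argument is correct in outline but takes a different route from the paper, which gives no argument of its own and simply deduces both assertions from \cite[\S 7.2]{kashiwara1994}.

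You instead derive everything from Kashiwara's Demazure crystal theory \cite{kashiwara1993}. The key inclusion is $\te_i^{\max}\B_w(\lambda)\subseteq\B_{s_iw}(\lambda)$ for $\ell(s_iw)<\ell(w)$. It follows from three facts:
\begin{itemize}
\item the recursive definition;
\item the independence of $\B_w(\lambda)$ from the reduced word;
\item the $\te_i$-stability of Demazure crystals.
\end{itemize}
That stability holds for every $i$ and follows from the string property (2), so your condition on $\ell(s_is_iw)$ is not needed. The inclusion yields the second assertion. Applying the second assertion on each $\{i,j\}$-component yields the braid relations, and Matsumoto then gives independence. This makes the mechanism transparent and uses only facts of the kind the paper already invokes, namely that Demazure crystals are extremal. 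The price is that it rests on the deep well-definedness of $\B_w(\lambda)$, which comes from global basis and Demazure module theory. The paper's citation is shorter but opaque.

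Two corrections are needed.

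First, the word \emph{onto} in (3) is false. In type $A_2$ with $\lambda=\varpi_1$, $w=w_\circ$ and $i=1$, you get $\te_1^{\max}\B(\varpi_1)=\{\boxed{1},\boxed{3}\}$. On the other hand, $\B_{s_1w_\circ}(\varpi_1)=\B_{s_2s_1}(\varpi_1)=\{\boxed{1},\boxed{2},\boxed{3}\}$. Since you only ever use the inclusion, just drop \emph{onto}.

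Second, your bookkeeping in the second assertion is off. Applying $\te_{i_\ell}^{\max}$ to $\B_{w_\circ}(\lambda)$ lands in $\B_{s_{i_\ell}w_\circ}(\lambda)=\B_{s_{i_{\ell-1}}\cdots s_{i_1}}(\lambda)$, not in $\B_{s_{i_1}\cdots s_{i_{\ell-1}}}(\lambda)$. The same example shows this: with $w_\circ=s_1s_2s_1$, you have $\boxed{3}\in\te_1^{\max}\B(\varpi_1)$ but $\boxed{3}\notin\B_{s_1s_2}(\varpi_1)=\{\boxed{1},\boxed{2}\}$. The chain still works because $w_\circ^{-1}=w_\circ$. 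The reversed word is therefore again reduced, so at each step $s_{i_k}$ is a left descent of the current element $s_{i_k}\cdots s_{i_1}$. The same bookkeeping justifies your rank-two step.
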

	%%%%%%%%%%%%%%%%%%%%%%%%%%%%%%%%%%%%%%%%%%%%%%%%%%%%%%%%%%%%%%%%
	\section{Extremal subsets and checked operators}\label{sec:extremal-subsets}
	%%%%%%%%%%%%%%%%%%%%%%%%%%%%%%%%%%%%%%%%%%%%%%%%%%%%%%%%%%%%%%%%
	
	\subsection{Extremal subsets}\label{subsec:extremal-crystals}
	Fix $\lambda\in P^+$.
	The notion of an extremal subset is a natural
	generalization of a Demazure crystal
	$\B_w(\lambda)\subseteq\Bl$, $w\in W$,
	introduced by Kashiwara in \cite{kashiwara1993};
    we know from \cite[Proposition 3.3.5]{kashiwara1993} that a Demazure crystal is an extremal subset.
	
	\begin{definition}[{\cite[\S 4]{assaf2023}}]\label{def:extremal-recall}
		A subset $X\subseteq\Bl$ is called an \emph{extremal} subset 
		if
		\begin{align}\label{i-stringproperty}
			S^i(b)\cap X\in\{\varnothing,\ S^i(b),\;\{\te_i^{\max}(b)\}\}
		\end{align}
		for all $b\in\Bl$ and $i\in I$.
	\end{definition}

	We denote by $\mathbb{E}_\lambda$ the set of all extremal subsets of $\Bl$.
	
	\begin{proposition}[{\cite[\S 4]{assaf2023}}]\label{prop:ext-e-stable}
		If $X\in\mathbb{E}_\lambda$, then $X$ is $\te_i$-stable for all $i\in I$,
		that is, $\te_iX\subseteq X\sqcup\{0\}$ for all $i\in I$.
	\end{proposition}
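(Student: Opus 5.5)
The claim follows directly from Definition~\ref{def:extremal-recall} applied to one suitable $i$-string. Fix $X\in\mathbb{E}_\lambda$, an index $i\in I$, and $b\in X$ with $\te_i(b)\neq 0$. We must show that $\te_i(b)\in X$; if $\te_i(b)=0$ there is nothing to prove.

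The natural string to use is $S^i(b)$ itself. It contains $b$, so $S^i(b)\cap X\neq\varnothing$. By condition \eqref{i-stringproperty}, $S^i(b)\cap X$ is therefore either $S^i(b)$ or $\{\te_i^{\max}(b)\}$.

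Suppose first that $S^i(b)\cap X=\{\te_i^{\max}(b)\}$. Since $b$ lies in this intersection, $b=\te_i^{\max}(b)=\te_i^{\varepsilon_i(b)}(b)$. Seminormality gives $\varepsilon_i(b)\ge 1$ because $\te_i(b)\neq 0$. In a seminormal crystal the elements of an $i$-string are pairwise distinct: $\wt(\te_i^k b)=\wt(b)+k\alpha_i$, and these weights differ for different $k$. So $b\neq\te_i^{\max}(b)$, a contradiction. This case cannot occur.

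Hence $S^i(b)\cap X=S^i(b)$, that is, $S^i(b)\subseteq X$. In particular $\te_i(b)\in S^i(b)\subseteq X$. Since $b\in X$ and $i\in I$ were arbitrary, $\te_iX\subseteq X\sqcup\{0\}$ for every $i\in I$.

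The only point needing care is ruling out the singleton case, and that rests on the weight argument showing that elements of an $i$-string are pairwise distinct.
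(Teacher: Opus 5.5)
Your argument is correct. The paper gives no proof of its own here; it simply cites Assaf--Dranowski--Gonz\'alez. Your direct check, that $b\in S^i(b)\cap X$ with $b\neq\te_i^{\max}(b)$ forces $S^i(b)\subseteq X$, is the same reasoning the paper uses in Step~1 of Proposition~\ref{prop:decomp-into-classes}. Your weight argument for $b\neq\te_i^{\max}(b)$ is fine, though it also follows immediately from $\te_i(\te_i^{\max}(b))=0\neq\te_i(b)$.
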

	
	\begin{proposition}[{\cite[Lemma 4.1]{armon2023}}]\label{prop:cup-cap-extremal}
		If $X,Y\in\mathbb{E}_\lambda$, then $X\cap Y,\;X\cup Y\in\mathbb{E}_\lambda$.
	\end{proposition}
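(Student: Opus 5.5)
To prove that $X\cap Y$ and $X\cup Y$ lie in $\mathbb{E}_\lambda$, we check the string condition \eqref{i-stringproperty} directly, one string at a time. Fix $b\in\Bl$ and $i\in I$, and write $S:=S^i(b)$ and $t:=\te_i^{\max}(b)$, the top of the string. The condition says that $S\cap Z$ is one of $\varnothing$, $\{t\}$, or $S$. These three sets form a chain under inclusion, $\varnothing\subseteq\{t\}\subseteq S$. The whole proof reduces to the observation that a chain is closed under pairwise intersection and union, because for comparable sets the intersection is the smaller one and the union is the larger one.

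The steps, in order, are as follows.
\begin{enumerate}
\item Note that $S^i(b)$ depends only on the string and not on the choice of $b$ within it, so every string is covered by ranging over all $b$ and $i$.
\item Use the distributive identities
\[
S\cap(X\cap Y)=(S\cap X)\cap(S\cap Y),\qquad S\cap(X\cup Y)=(S\cap X)\cup(S\cap Y).
\]
\item By hypothesis, $A:=S\cap X$ and $B:=S\cap Y$ both belong to the chain $\{\varnothing,\{t\},S\}$. Hence $A\subseteq B$ or $B\subseteq A$.
\item Conclude that $A\cap B$ and $A\cup B$ each equal one of $A$ or $B$. Therefore they again lie in $\{\varnothing,\{t\},S\}$, which is exactly condition \eqref{i-stringproperty} for $X\cap Y$ and for $X\cup Y$.
\end{enumerate}

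The only point needing care is that $\{t\}$ and $S$ are comparable. This holds because $t\in S$ always: $t=\te_i^{\varepsilon_i(b)}(b)\neq0$ by seminormality. There is also a degenerate case to check. When the string has length one we have $S=\{t\}$, and the chain simply collapses to $\{\varnothing,S\}$, so the argument is unaffected. No real obstacle is expected; the argument is purely set-theoretic and uses none of the crystal structure beyond $t\in S$.
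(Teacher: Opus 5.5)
Your proof is correct and complete. Since $\te_i^{\max}(b)\in S^i(b)$, the three admissible values of $S^i(b)\cap Z$ form a chain $\varnothing\subseteq\{\te_i^{\max}(b)\}\subseteq S^i(b)$, and the distributive identities reduce the claim to the fact that a chain is closed under pairwise $\cap$ and $\cup$.

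The paper gives no argument of its own here; it simply cites \cite[Lemma 4.1]{armon2023}. Your direct check therefore supplies the elementary verification that the citation stands in for. Your step 1 is harmless but unnecessary, since condition \eqref{i-stringproperty} is already imposed for every $b$ and every $i$.
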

	
	\begin{definition}\label{def:ext-generated}
		Let $Y$ be a subset of $\Bl$.
		The \emph{extremal subset generated by $Y$} is defined to be
		\[
		\langle Y\rangle_{\Ext}:=\bigcap_{\substack{X\in\mathbb{E}_\lambda\\Y\subseteq X}}X.
		\]
		For $b\in\Bl$,
		we write $\langle\{b\}\rangle_{\Ext}$ as $\langle b\rangle_{\Ext}$ for simplicity of notation.
	\end{definition}
	
	We set
	\begin{align}\label{eq:Gl}
		\Gl:=\{b\in\Bl\mid \langle b\rangle_{\Ext}=\Bl\}.
	\end{align}
	
	\begin{remark}\label{rem:bwo-in-Gl}
		Every element of $\Bl$ is obtained from the lowest element $b_{w_\circ\lambda}$ by successively applying
		the operators $\te_i$ ($i\in I$).
		Since extremal subsets are $\te_i$-stable for all $i\in I$ (Proposition~\ref{prop:ext-e-stable}),
		any extremal subset containing $b_{w_\circ\lambda}$ contains all elements of $\Bl$.
		Therefore the lowest element $b_{w_\circ\lambda}\in\Bl$ is contained in $\Gl$.
	\end{remark}

    \begin{proposition}\label{prop:Emaxexists}
        The crystal basis $\Bl$ has a unique maximal proper extremal subset $E_{\max}$.
    \end{proposition}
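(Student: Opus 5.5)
The plan is to take $E_{\max}$ to be the union of all proper extremal subsets of $\Bl$, and to show that it is again a proper extremal subset. Once that is done, it is automatically the unique maximal one: every proper extremal subset is contained in it by construction, and any other maximal proper extremal subset would be contained in $E_{\max}$ and hence equal to it.

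\textbf{Extremality of the union.} Since $\Bl$ is finite, $\mathbb{E}_\lambda$ is a finite set. The empty set is extremal, because $S^i(b)\cap\varnothing=\varnothing$. Applying Proposition~\ref{prop:cup-cap-extremal} repeatedly (by induction on the number of sets) shows that any finite union of extremal subsets is extremal. In particular,
\[
E_{\max}:=\bigcup_{\substack{X\in\mathbb{E}_\lambda\\ X\neq\Bl}}X
\]
belongs to $\mathbb{E}_\lambda$. If there are no proper extremal subsets other than $\varnothing$, then $E_{\max}=\varnothing$, which is still fine.

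\textbf{Properness.} This is the only point needing an actual argument. I would use Remark~\ref{rem:bwo-in-Gl}: any extremal subset containing $b_{w_\circ\lambda}$ equals $\Bl$. Hence no proper extremal subset contains $b_{w_\circ\lambda}$, and so $b_{w_\circ\lambda}\notin E_{\max}$. Thus $E_{\max}\neq\Bl$.

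This step is the main obstacle. A union of proper subsets need not be proper in general, and properness here relies on the existence of a single element lying outside every proper extremal subset. Remark~\ref{rem:bwo-in-Gl} supplies exactly such an element, namely $b_{w_\circ\lambda}$, so the proof goes through.
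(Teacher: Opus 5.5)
Your proof is correct and matches the paper's argument: you take $E_{\max}$ to be the union of all proper extremal subsets, get extremality from Proposition~\ref{prop:cup-cap-extremal}, and get properness from Remark~\ref{rem:bwo-in-Gl}, since $b_{w_\circ\lambda}$ lies in no proper extremal subset. Your remarks on the finiteness of $\mathbb{E}_\lambda$ and on induction over finite unions only make explicit what the paper leaves implicit.
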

    \begin{proof}
        Let $E_{\max}$ denote the union of all proper extremal subsets of $\Bl$; 
        $E_{\max}\in\mathbb{E}_\lambda$ by Proposition~\ref{prop:cup-cap-extremal}.
        Because $b_{w_\circ\lambda}$ is contained in no proper extremal subset of $\Bl$ (Remark~\ref{rem:bwo-in-Gl}),
        we have $b_{w_\circ\lambda}\notin E_{\max}$, and hence $E_{\max}\subsetneq\Bl$.
        Thus $E_{\max}$ is a proper extremal subset containing every proper extremal subset of $\Bl$,
        that is, the unique maximal proper extremal subset of $\Bl$.
    \end{proof}

	\begin{proposition}\label{prop:max-proper-extremal}
		We have $\Bl\setminus\Gl=E_{\max}$.
	\end{proposition}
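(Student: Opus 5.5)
We prove the two inclusions separately, using only that $E_{\max}$ is the union of all proper extremal subsets (Proposition~\ref{prop:Emaxexists}) and that $\langle b\rangle_{\Ext}$ is itself an extremal subset.

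First we check that $\langle Y\rangle_{\Ext}\in\mathbb{E}_\lambda$ for every $Y\subseteq\Bl$. Definition~\ref{def:ext-generated} is an intersection over a finite family, since $\Bl$ is finite. This family is nonempty because it contains $\Bl$, and $\Bl$ is trivially extremal. So iterating Proposition~\ref{prop:cup-cap-extremal} shows $\langle Y\rangle_{\Ext}$ is extremal. By construction it is also the smallest extremal subset containing $Y$.

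For $\Bl\setminus\Gl\subseteq E_{\max}$, take $b\notin\Gl$. Then $\langle b\rangle_{\Ext}$ is an extremal subset containing $b$, and it is proper because $b\notin\Gl$. Hence it is one of the sets in the union defining $E_{\max}$, so $b\in\langle b\rangle_{\Ext}\subseteq E_{\max}$.

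For $E_{\max}\subseteq\Bl\setminus\Gl$, take $b\in E_{\max}$. Then $b$ lies in some proper extremal subset $X$. By minimality, $\langle b\rangle_{\Ext}\subseteq X\subsetneq\Bl$, so $b\notin\Gl$.

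No step here is a real obstacle. The only point needing care is that $\langle b\rangle_{\Ext}$ is extremal, which is exactly the finite-intersection argument above.
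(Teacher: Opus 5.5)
Your proof is correct and follows the paper's argument essentially verbatim. The paper proves both inclusions the same way: $E_{\max}$ is the union of all proper extremal subsets, and $b\in\langle b\rangle_{\Ext}\subseteq X$ for every extremal $X$ containing $b$. Your explicit check that $\langle b\rangle_{\Ext}$ is itself extremal fills in a point the paper leaves implicit: the intersection is finite and nonempty, so Proposition~\ref{prop:cup-cap-extremal} applies.
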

	\begin{proof}
        It suffices to show that $\Bl\setminus\Gl$ is identical to $E_{\max}$, the union of all proper extremal subsets of $\Bl$;
		see Proposition~\ref{prop:Emaxexists}.
		Let $X$ be a proper extremal subset of $\Bl$.
		Because $\langle b\rangle_{\Ext}\subseteq X\subsetneq \Bl$ for $b\in X$, we deduce that $b\notin \Gl$, or equivalently,
		$b\in \Bl\setminus\Gl$.
		Therefore, we get $\Bl\setminus\Gl\supseteq X$,
		and hence $\Bl\setminus\Gl\supseteq E_{\max}$.
		Let us show the reverse inclusion $\Bl\setminus\Gl\subseteq E_{\max}$.
		If $b\in\Bl\setminus\Gl$,
		then we have $\langle b\rangle_{\Ext}\subsetneq \Bl$.
		Thus, we get $b\in\langle b\rangle_{\Ext}\subseteq E_{\max}$.
		Therefore, $\Bl\setminus\Gl\subseteq E_{\max}$.
		This completes the proof of the proposition.
	\end{proof}
	
	Let $\mathbf{I}:=\{\mathbf{i}=(i_1,\dots,i_s)\mid s\ge 0,\;i_1,\dots,i_s\in I\}$ be the set of finite sequences of elements of $I$.
	For $\mathbf{i}=(i_1,\dots,i_s)\in\mathbf{I}$,
	we set $\tf_{\mathbf{i}}:=\tf_{i_1}\cdots\tf_{i_s}$.
	We define $\F:=\{\tf_{\mathbf{i}}\mid\mathbf{i}\in\mathbf{I}\}$,
	and $\F(Y):=\{f(b)\mid f\in\F,\ b\in Y\}\setminus\{0\}$ for a subset $Y\subseteq\Bl$;
	for $b\in\Bl$, we write $\F(b)$ for $\F(\{b\})$.
	
	\begin{proposition}\label{prop:Gl-F-stable}
		The subset $\Gl$ is $\F$-stable,
		that is, $\F(\Gl)\subseteq\Gl$.
	\end{proposition}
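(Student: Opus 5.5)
It suffices to show that $\tf_i(\Gl)\subseteq\Gl\sqcup\{0\}$ for every $i\in I$; iterating this gives $\F(\Gl)\subseteq\Gl$. So fix $b\in\Gl$ and $i\in I$ with $c:=\tf_i(b)\ne0$. The goal is to show $\langle c\rangle_{\Ext}=\Bl$, and for that it is enough to prove $b\in\langle c\rangle_{\Ext}$. Indeed, $\langle c\rangle_{\Ext}$ is an extremal subset containing $b$, so it contains $\langle b\rangle_{\Ext}=\Bl$.

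To obtain $b\in\langle c\rangle_{\Ext}$, apply the defining string property (\ref{i-stringproperty}) to $X:=\langle c\rangle_{\Ext}$ and the $i$-string $S^i(c)=S^i(b)$. Since $c\in X$, the intersection $S^i(c)\cap X$ is nonempty, so it is either $S^i(c)$ or $\{\te_i^{\max}(c)\}$.

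In the first case, $b\in S^i(c)\subseteq X$ and we are done. In the second case, we would need $c=\te_i^{\max}(c)$, i.e.\ $\te_i(c)=0$. But $\te_i(c)=\te_i\tf_i(b)=b\ne0$, which is a contradiction. Hence $S^i(c)\cap X=S^i(c)\ni b$.

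A slicker route to the same conclusion is Proposition~\ref{prop:ext-e-stable}: $X$ is $\te_i$-stable, so $b=\te_i(c)\in X$ directly.

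There is no real obstacle here. The only point to check is the crystal axiom $\te_i\tf_i(b)=b$ whenever $\tf_i(b)\ne0$, which holds in $\Bl$.
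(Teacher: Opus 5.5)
Your proof is correct and is the paper's argument: the paper reduces to $\tf_i$-stability and then uses the $\te_i$-stability of extremal subsets (Proposition~\ref{prop:ext-e-stable}) to get $b=\te_i\tf_i(b)\in\langle\tf_i(b)\rangle_{\Ext}$, which is exactly your ``slicker route.'' Your primary argument via the string property just unpacks that $\te_i$-stability in this one instance, so it is the same proof.
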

	\begin{proof}
		It suffices to show that $\Gl$ is $\tf_i$-stable for all $i\in I$,
		that is, $\tf_i\Gl\subseteq\Gl\sqcup\{0\}$ for all $i\in I$.
		Let $b\in\Gl$ and $i\in I$ be such that $\tf_i(b)\ne0$.
		Since extremal subsets are $\te_i$-stable, we see that $b=\te_i\tf_i(b)\in\langle\tf_i(b)\rangle_{\Ext}$,
		which implies that $\Bl=\langle b\rangle_{\Ext}\subseteq\langle\tf_i(b)\rangle_{\Ext}.$
		Therefore, we get $\langle\tf_i(b)\rangle_{\Ext}=\Bl$,
		and hence $\tf_i(b)\in\Gl$.
	\end{proof}
	
	\subsection{Checked operators}\label{subsec:checked-operator}
	
	\begin{definition}\label{def:chke}
		For $b\in\Bl$ and $i\in I$, we set
		\begin{align}
			\chke_i(b):=
			\begin{cases}
				\tf_i\te_i^{\max}(b) & \text{if }\te_i(b)\ne0,\\
				b & \text{if }\te_i(b)=0.
			\end{cases}
		\end{align}
	\end{definition}
	\begin{remark}\label{rem:chkechke=chke}
		We see that $(\chke_i)^2=\chke_i$ for $i\in I$.
	\end{remark}
	
	\begin{proposition}\label{prop:chke-invariant}
		For $b\in\Bl$ and $i\in I$,
		we have $\langle\chke_i(b)\rangle_{\Ext}=\langle b\rangle_{\Ext}$.
	\end{proposition}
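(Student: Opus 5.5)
The plan is to prove the two inclusions $\langle\chke_i(b)\rangle_{\Ext}\subseteq\langle b\rangle_{\Ext}$ and $\langle b\rangle_{\Ext}\subseteq\langle\chke_i(b)\rangle_{\Ext}$ separately. If $\te_i(b)=0$, then $\chke_i(b)=b$ and there is nothing to prove, so we assume $\te_i(b)\neq0$. Set $b':=\te_i^{\max}(b)$, so that $\chke_i(b)=\tf_i(b')$. Since $\te_i(b)\ne0$, the element $\tf_i(b')$ is nonzero and lies in the $i$-string $S^i(b)=S^i(b')$. Moreover $b\ne b'$, so this string has at least two elements.

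For the inclusion $\langle b\rangle_{\Ext}\supseteq\langle\chke_i(b)\rangle_{\Ext}$, we argue as follows. The set $X:=\langle b\rangle_{\Ext}$ is extremal by Proposition~\ref{prop:cup-cap-extremal}, since an intersection of finitely many extremal subsets is extremal and $\Bl$ is finite. The intersection $X\cap S^i(b)$ contains $b\neq \te_i^{\max}(b)$, so it is neither empty nor equal to $\{\te_i^{\max}(b)\}$. By the string property \eqref{i-stringproperty}, it must therefore be all of $S^i(b)$. In particular $\chke_i(b)=\tf_i(b')\in X$, and minimality of $\langle\chke_i(b)\rangle_{\Ext}$ gives the inclusion.

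For the reverse inclusion, let $Y:=\langle\chke_i(b)\rangle_{\Ext}$. The element $\tf_i(b')$ lies in $S^i(b)\cap Y$ and differs from $\te_i^{\max}(b)=b'$. Applying the string property to $Y$ in the same way gives $S^i(b)\subseteq Y$, so $b\in Y$ and hence $\langle b\rangle_{\Ext}\subseteq Y$. Alternatively, one could use $\te_i$-stability (Proposition~\ref{prop:ext-e-stable}) only when $b$ lies below $\chke_i(b)$ in the string, which is why the string-property argument is preferable.

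There is essentially no obstacle. The only point requiring care is noting that $\langle\,\cdot\,\rangle_{\Ext}$ is itself extremal, which follows from Proposition~\ref{prop:cup-cap-extremal} together with finiteness of $\Bl$ (and $\Bl$ itself is trivially extremal, so the intersection is over a nonempty family).
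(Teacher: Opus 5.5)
Your proof is correct and essentially the paper's: the reverse inclusion is exactly the paper's string-property argument, and for the forward inclusion you apply the string property to $\langle b\rangle_{\Ext}$ directly, where the paper uses $\te_i$-stability. Your aside about $\te_i$-stability is slightly off, though: since $\chke_i(b)=\te_i^{\varepsilon_i(b)-1}(b)$ whenever $\te_i(b)\ne0$, the element $\chke_i(b)$ always lies at or above $b$ in its $i$-string, so $\te_i$-stability of $\langle b\rangle_{\Ext}$ always yields $\langle\chke_i(b)\rangle_{\Ext}\subseteq\langle b\rangle_{\Ext}$, and that is how the paper argues.
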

	\begin{proof}
		We first show the inclusion $\langle\chke_i(b)\rangle_{\Ext}\subseteq\langle b\rangle_{\Ext}$.
		Since $\langle b\rangle_{\Ext}$ is $\te_i$-stable (Proposition~\ref{prop:ext-e-stable}), 
		we have $\chke_i(b)=\te_i^k(b)\in\langle b\rangle_{\Ext}$ for some $k\in\mathbb{Z}_{\ge 0}$,
		which implies
		$\langle\chke_i(b)\rangle_{\Ext}\subseteq\langle b\rangle_{\Ext}$.
		We next show the reverse inclusion $\langle\chke_i(b)\rangle_{\Ext}\supseteq\langle b\rangle_{\Ext}$.
		If $\chke_{i}(b)=b$,
		then the assertion is obvious.
		Assume that $\chke_{i}(b)\ne b$.
		We note that $S^i(\chke_{i}(b))=S^i(b)$.
		Since $\chke_i(b)\ne b$, Definition~\ref{def:chke} forces $\te_i(b)\ne0$,
		and hence $\te_i^{\max}(b)\ne\chke_{i}(b)$.
		Because $\chke_{i}(b)\in S^i(\chke_{i}(b))\cap\langle\chke_i(b)\rangle_{\Ext}=S^i(b)\cap\langle\chke_i(b)\rangle_{\Ext}$,
		it follows that
		$S^i(b)\cap\langle\chke_i(b)\rangle_{\Ext}=S^i(b)$.
		Thus we get $b\in\langle\chke_i(b)\rangle_{\Ext}$,
		and hence
		$\langle b\rangle_{\Ext}\subseteq\langle\chke_i(b)\rangle_{\Ext},$
		as desired.
	\end{proof}
	
	Recall that $\rho:=\sum_{i\in I}\varpi_i$.
	
	\begin{proposition}\label{prop:eimax-to-chkw}
		For $b\in\Bl$ and $i\in I$, we have
		$
		\te_i^{\max}(b_\rho\otimes b)=b_\rho\otimes\chke_i(b).
		$
	\end{proposition}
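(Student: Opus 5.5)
We compute directly with the tensor product rule for crystals (Kashiwara convention as in \cite[\S 4.4]{hongkong}). The ingredient about $b_\rho$ is that it is the highest element of $\B(\rho)$ with $\varepsilon_i(b_\rho)=0$, $\varphi_i(b_\rho)=\langle\rho,\alpha_i^\lor\rangle=1$, and $\tf_i^2(b_\rho)=0$. Write $\varepsilon=\varepsilon_i(b)$ and $\varphi=\varphi_i(b)$. For $x=b_\rho\otimes b$, the signature rule gives
\[
\varepsilon_i(b_\rho\otimes b)=\max\bigl(\varepsilon_i(b_\rho),\ \varepsilon_i(b)-\varphi_i(b_\rho)\bigr)=\max(0,\varepsilon-1).
\]
It also says that $\te_i$ acts on the second factor exactly when $\varphi_i(b_\rho)<\varepsilon_i(b)$, i.e.\ when $\varepsilon\ge 1$. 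It acts on the first factor otherwise, and there it gives $0$ since $b_\rho$ is highest.

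\textbf{Case $\te_i(b)=0$.} Then $\varepsilon=0$ and $\varepsilon_i(b_\rho\otimes b)=0$. Hence $\te_i^{\max}(b_\rho\otimes b)=b_\rho\otimes b=b_\rho\otimes\chke_i(b)$.

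\textbf{Case $\varepsilon\ge1$.} Apply $\te_i$ repeatedly to $b_\rho\otimes\te_i^{k}(b)$. Since $\varepsilon_i(\te_i^k b)=\varepsilon-k$, the operator acts on the second factor as long as $\varepsilon-k\ge 1$. This gives
\[
\te_i^{\,\varepsilon-1}(b_\rho\otimes b)=b_\rho\otimes\te_i^{\,\varepsilon-1}(b),
\]
which is consistent with $\varepsilon_i(b_\rho\otimes b)=\varepsilon-1$. This is therefore $\te_i^{\max}(b_\rho\otimes b)$. Finally, $\te_i^{\,\varepsilon-1}(b)=\tf_i\te_i^{\,\varepsilon}(b)=\tf_i\te_i^{\max}(b)=\chke_i(b)$, using $\tf_i\te_i=\mathrm{id}$ on elements with $\te_i\neq0$.

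The only delicate point is matching the tensor product convention. In the opposite (anti-Kashiwara) convention, $b_\rho$ would need to be placed on the right. So I would first confirm the cited convention, in which $\te_i(b_1\otimes b_2)=b_1\otimes\te_i b_2$ if $\varphi_i(b_1)<\varepsilon_i(b_2)$ and $\te_ib_1\otimes b_2$ otherwise. Everything else is a routine check.
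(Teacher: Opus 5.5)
Your argument is essentially the paper's: compute $\varepsilon_i(b_\rho\otimes b)=\max\{0,\varepsilon_i(b)-1\}$ from the tensor product rule, move $\te_i$ onto the second factor $\varepsilon_i(b)-1$ times, and identify $\te_i^{\varepsilon_i(b)-1}(b)=\tf_i\te_i^{\max}(b)=\chke_i(b)$; the paper only splits the cases differently, as $\varepsilon_i(b)\le1$ versus $\varepsilon_i(b)>1$. One slip needs fixing: since $\varphi_i(b_\rho)=1$, the condition $\varphi_i(b_\rho)<\varepsilon_i(\cdot)$ means $\varepsilon_i(\cdot)\ge2$, not $\ge1$ (indeed $\te_i(b_\rho\otimes b)=0$ when $\varepsilon_i(b)=1$, as your own formula for $\varepsilon_i(b_\rho\otimes b)$ shows), so ``as long as $\varepsilon-k\ge1$'' should read ``as long as $\varepsilon-k\ge2$''; you only apply $\te_i$ for $k\le\varepsilon-2$, so your conclusion is unaffected.
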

	\begin{proof}
		We see that
		\[
		\te_i(b_\rho\otimes b)=\begin{cases}0 &\text{if } \varepsilon_i(b)\le 1,\\
			b_\rho\otimes \te_i(b) &\text{if }  \varepsilon_i(b)>1.\end{cases}
		\]
		Also, we have the following:
		\begin{enumerate}
			\item[(i)] If $\varepsilon_i(b)\le 1$,
			then $\te_i(b_\rho\otimes b)=0$ and $\chke_{i}(b)=b$.
			Hence, $\te_i^{\max}(b_\rho\otimes b)=b_\rho\otimes b=b_\rho\otimes \chke_{i}(b)$.
			\item[(ii)] If $\varepsilon_i(b)=k> 1$,
			then $\chke_{i}(b)=\tf_i\te_i^{\max}(b)=\te_i^{k-1}(b)$.
			Also, since $\varepsilon_i(b_\rho)=0$ and $\langle\wt(b_\rho),\alpha_i^\vee\rangle=\langle\rho,\alpha_i^\vee\rangle=1$,
			by the tensor product rule (see \cite[\S 4.4]{hongkong})
			we have 
			\begin{align*}
				\varepsilon_i(b_\rho\otimes b)=\max\{\varepsilon_i(b_\rho),\varepsilon_i(b)-\langle\wt(b_\rho),\alpha_i^\vee\rangle\}=\max\{0,k-1\}=k-1.
			\end{align*}
			Hence, we obtain $$\te_i^{\max}(b_\rho\otimes b)=\te_i^{k-1}(b_\rho\otimes b)
			=b_\rho\otimes \te_i^{k-1}(b)=b_\rho\otimes \tf_i\te_i^{k}(b)=
			b_\rho\otimes \chke_{i}(b).$$
		\end{enumerate}
		Thus we have proved the proposition.
	\end{proof}
	
	\begin{definition}\label{def:chkw}
		Let $w\in W$, and let $w=s_{i_1}\cdots s_{i_\ell}$ be a reduced expression of $w$.
		Then we set $\chke_{w}:=\chke_{i_1}\cdots\chke_{i_\ell}$;
		we deduce from Propositions~\ref{prop:eimax-to-chkw} and \ref{prop:eimax-indep}
		that the operator $\chke_{w}$ is independent of the choice of a reduced
		expression of $w$.
	\end{definition}
	
	Recall that $w_\circ$ denotes the longest element of $W$.
	
	\begin{proposition}\label{prop:chkwmax-check-stable}
		For $i\in I$, we have $\chke_i\chke_{w_\circ}=\chke_{w_\circ}$ and $\chke_{w_\circ}\chke_i=\chke_{w_\circ}$.
		Therefore we have $(\chke_{w_\circ})^2=\chke_{w_\circ}$.
	\end{proposition}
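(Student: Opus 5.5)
The plan is to transport everything to the tensor product $b_\rho\otimes\Bl$ and use the corresponding identities for the operators $\te_i^{\max}$ and $\te_{w_\circ}^{\max}$. The map $b\mapsto b_\rho\otimes b$ from $\Bl$ to $\B(\rho)\otimes\Bl$ is injective. By Proposition~\ref{prop:eimax-to-chkw}, $\te_i^{\max}(b_\rho\otimes b)=b_\rho\otimes\chke_i(b)$ for every $b$. Iterating along a reduced expression $w_\circ=s_{i_1}\cdots s_{i_\ell}$ therefore gives
\[
\te_{w_\circ}^{\max}(b_\rho\otimes b)=b_\rho\otimes\chke_{w_\circ}(b).
\]
Hence it suffices to prove the operator identities $\te_i^{\max}\te_{w_\circ}^{\max}=\te_{w_\circ}^{\max}$ and $\te_{w_\circ}^{\max}\te_i^{\max}=\te_{w_\circ}^{\max}$ on the crystal $\B(\rho)\otimes\Bl$, restricted to elements of the form $b_\rho\otimes b$. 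Injectivity then transfers them back to $\chke$.

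For the first identity, I would use that $s_iw_\circ<w_\circ$, so $w_\circ$ has a reduced expression beginning with $s_i$, say $w_\circ=s_is_{j_2}\cdots s_{j_\ell}$. By Proposition~\ref{prop:eimax-indep} (which applies to any seminormal crystal, in particular to $\B(\rho)\otimes\Bl$ as a direct sum of $\B(\mu)$'s),
\[
\te_{w_\circ}^{\max}=\te_i^{\max}\te_{j_2}^{\max}\cdots\te_{j_\ell}^{\max}.
\]
Since $\te_i^{\max}\te_i^{\max}=\te_i^{\max}$, we get $\te_i^{\max}\te_{w_\circ}^{\max}=\te_{w_\circ}^{\max}$. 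On the $\chke$ side this is simply Remark~\ref{rem:chkechke=chke} applied to a reduced word starting with $i$. For the second identity, I would use that $w_\circ s_i<w_\circ$, so some reduced expression of $w_\circ$ ends with $s_i$. Idempotence of $\chke_i$ then gives $\chke_{w_\circ}\chke_i=\chke_{w_\circ}$ directly, using the independence statement of Definition~\ref{def:chkw}. So in fact the tensor product is only needed to justify independence of the reduced word, which the paper has already established.

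Finally, $(\chke_{w_\circ})^2=\chke_{w_\circ}$ follows by writing $\chke_{w_\circ}=\chke_{i_1}\cdots\chke_{i_\ell}$ and absorbing the factors one at a time:
\[
\chke_{w_\circ}\chke_{i_1}\cdots\chke_{i_\ell}=\chke_{w_\circ}\chke_{i_2}\cdots\chke_{i_\ell}=\cdots=\chke_{w_\circ},
\]
using $\chke_{w_\circ}\chke_i=\chke_{w_\circ}$ at each step.

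The only point needing care is the use of reduced expressions of $w_\circ$ starting or ending with an arbitrary $s_i$. This is a standard fact: $\ell(s_iw_\circ)=\ell(w_\circ s_i)=\ell(w_\circ)-1$. I therefore expect no genuine obstacle beyond invoking Definition~\ref{def:chkw} correctly.
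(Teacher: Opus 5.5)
Your argument is correct and is essentially the paper's: choose a reduced expression of $w_\circ$ ending (resp.\ beginning) with $s_i$, then use the independence of the reduced expression from Definition~\ref{def:chkw} and the idempotence $(\chke_i)^2=\chke_i$. For $(\chke_{w_\circ})^2=\chke_{w_\circ}$, absorb the factors $\chke_{i_1},\dots,\chke_{i_\ell}$ one at a time.

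The differences are cosmetic. The paper gets the expression beginning with $s_i$ by reversing one ending with $s_i$ (using $w_\circ^{-1}=w_\circ$), rather than from $\ell(s_iw_\circ)=\ell(w_\circ)-1$. Your opening detour through $b_\rho\otimes\Bl$ is superfluous, as you note yourself. Also, its parenthetical claim that reduced-word independence of $\te_w^{\max}$ holds in \emph{any} seminormal crystal is too strong; it holds for direct sums of $\B(\mu)$'s, which is all you use, so this is harmless here.
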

	\begin{proof}
		Let $i\in I$.
		Since $\ell(\wo s_i)=\ell(\wo)-1$,
		$\wo$ has a reduced expression
		$\wo=s_{i_1}\cdots s_{i_{\ell-1}}s_i$ ending with $s_i$.
		By Remark~\ref{rem:chkechke=chke},
		we get
		$$\chke_{w_\circ}\chke_i
		=\chke_{i_1}\cdots\chke_{i_{\ell-1}}\chke_i\chke_i
		=\chke_{i_1}\cdots\chke_{i_{\ell-1}}\chke_i=\chke_{w_\circ}.$$
		Also, since $\wo^{-1}=\wo$,
		it follows that $\wo=s_i s_{i_{\ell-1}}\cdots s_{i_1}$ is a reduced expression of $w_\circ$.
		By the same argument as above, we obtain $\chke_i\chke_{w_\circ}=\chke_{w_\circ}$, as desired.
	\end{proof}
	
	For $b\in\Bl$, we set
	\[
	\mathcal{C}_b:=\{c\in\Bl\mid\chke_{\wo}(c)=\chke_{\wo}(b)\}.
	\]
	We also set $\Tfix:=\{c\in\Bl\mid \chke_{\wo}(c)=c\}$.
	
	\begin{proposition}\label{prop:unique-checked-rep}
		We have
		$
		\Bl=\bigsqcup_{b\in\Tfix}\mathcal{C}_b.
		$		
		Moreover, $\mathcal{C}_b\cap\Tfix=\{\chke_{\wo}(b)\}$ for $b\in\Bl$.
	\end{proposition}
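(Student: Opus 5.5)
The whole statement follows from the idempotence $(\chke_{\wo})^2=\chke_{\wo}$ in Proposition~\ref{prop:chkwmax-check-stable}, so the plan is to argue formally with this identity and nothing else.

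\emph{The sets $\mathcal{C}_b$ partition $\Bl$.} By definition, $\mathcal{C}_b$ is the fiber of the map $\chke_{\wo}:\Bl\to\Bl$ over the point $\chke_{\wo}(b)$. Hence any two of these sets are either equal or disjoint, and their union is all of $\Bl$. Concretely, $\mathcal{C}_b=\mathcal{C}_{b'}$ holds exactly when $\chke_{\wo}(b)=\chke_{\wo}(b')$, and each $c\in\Bl$ lies in $\mathcal{C}_c$.

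\emph{The image of $\chke_{\wo}$ is $\Tfix$.} If $c\in\Tfix$, then $c=\chke_{\wo}(c)$ lies in the image. Conversely, idempotence gives
\[
\chke_{\wo}\bigl(\chke_{\wo}(b)\bigr)=\chke_{\wo}(b),
\]
so every element $\chke_{\wo}(b)$ of the image lies in $\Tfix$.

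\emph{The fibers are indexed by $\Tfix$ without repetition.} If $b,b'\in\Tfix$ and $\mathcal{C}_b=\mathcal{C}_{b'}$, then $b=\chke_{\wo}(b)=\chke_{\wo}(b')=b'$. The fibers over points of the image are therefore exactly the sets $\mathcal{C}_t$ for $t\in\Tfix$, each occurring once. This gives the disjoint union $\Bl=\bigsqcup_{t\in\Tfix}\mathcal{C}_t$.

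\emph{Computing $\mathcal{C}_b\cap\Tfix$.} The element $\chke_{\wo}(b)$ belongs to $\Tfix$ by the idempotence identity above. It also belongs to $\mathcal{C}_b$, because $\chke_{\wo}(\chke_{\wo}(b))=\chke_{\wo}(b)$, which is precisely the defining condition. For the reverse inclusion, take $c\in\mathcal{C}_b\cap\Tfix$. Then $c=\chke_{\wo}(c)=\chke_{\wo}(b)$. Hence $\mathcal{C}_b\cap\Tfix=\{\chke_{\wo}(b)\}$.

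No step is a real obstacle. The only nontrivial input is the idempotence of $\chke_{\wo}$, which is already established in Proposition~\ref{prop:chkwmax-check-stable}.
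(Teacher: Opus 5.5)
Your proof is correct and follows essentially the same route as the paper: the paper phrases the partition as equivalence classes of the relation $c\in\mathcal{C}_b$ rather than as fibers of $\chke_{\wo}$, and uses $\Tfix$ as a system of representatives. In both arguments the only real input is the idempotence $(\chke_{\wo})^2=\chke_{\wo}$ from Proposition~\ref{prop:chkwmax-check-stable}.
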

	\begin{proof}
		Define a relation $\sim$ on $\Bl$ by:
		for $b,c\in\Bl$, $b\sim c$ if $c\in \mathcal{C}_b$.
		It can be easily seen that $\sim$ is an equivalence relation on $\Bl$ whose equivalence classes are
		$\{\mathcal{C}_b\mid b\in\Bl\}$;
		if $c\in\mathcal{C}_b\cap\mathbb{T}$,
		then we have $c=\chke_{\wo}(c)=\chke_{\wo}(b)$, which shows $\mathcal{C}_b\cap\Tfix\subseteq\{\chke_{\wo}(b)\}$.
		Conversely, by Proposition~\ref{prop:chkwmax-check-stable} we have $\chke_{\wo}(\chke_{\wo}(b))=\chke_{\wo}(b)$,
		so that $\chke_{\wo}(b)\in\Tfix$ and $\chke_{\wo}(b)\in\mathcal{C}_b$.
		Thus we get $\mathcal{C}_b\cap\Tfix=\{\chke_{\wo}(b)\}$.
		Because $\mathbb{T}$ is the set of all representatives for $\sim$,
		we have $\Bl=\bigsqcup_{b\in\Tfix}\mathcal{C}_b$.
		Thus we have proved the proposition.
	\end{proof}

	\begin{proposition}\label{prop:weak-string-property}
		For $a,b\in\Bl$ and $i\in I$, we have
		\[
		\mathcal{C}_a\cap S^i(b)\in\left\{\varnothing,\ S^i(b),\
		S^i(b)\setminus\{\te_i^{\max}(b)\},\ \{\te_i^{\max}(b)\}\right\}.
		\]
	\end{proposition}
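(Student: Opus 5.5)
The key observation is that $\chke_i$ is constant on $S^i(b)\setminus\{\te_i^{\max}(b)\}$. Combined with the fact that $\chke_{\wo}$ absorbs $\chke_i$ (Proposition~\ref{prop:chkwmax-check-stable}), this shows that all non-top elements of an $i$-string lie in a single class $\mathcal{C}$. The top element is then the only element whose class can differ, and the four allowed intersections are exactly what this leaves.

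First I compute $\chke_i$ along a string. Write $S^i(b)=\{b_0,b_1,\dots,b_m\}$ with $b_0=\te_i^{\max}(b)$ and $b_k=\tf_i^k(b_0)$. By Definition~\ref{def:chke}:
\begin{itemize}
\item $\chke_i(b_0)=b_0$, because $\te_i(b_0)=0$;
\item for $k\ge1$, $\te_i(b_k)\ne0$ and $\te_i^{\max}(b_k)=b_0$, so $\chke_i(b_k)=\tf_i(b_0)=b_1$.
\end{itemize}
Hence every element of $S^i(b)\setminus\{\te_i^{\max}(b)\}$ has the same image $b_1$ under $\chke_i$.

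Next I transfer this to $\chke_{\wo}$. By Proposition~\ref{prop:chkwmax-check-stable}, $\chke_{\wo}=\chke_{\wo}\chke_i$. Therefore for every $k\ge1$,
\[
\chke_{\wo}(b_k)=\chke_{\wo}(\chke_i(b_k))=\chke_{\wo}(b_1).
\]
So all elements $b_1,\dots,b_m$ lie in the single class $\mathcal{C}_{b_1}$. Only $b_0$ may lie in a different class.

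Finally I do a case analysis. By Proposition~\ref{prop:unique-checked-rep}, the classes $\mathcal{C}_a$ are equivalence classes and so are pairwise disjoint or equal. Thus for a given $a$, the set $\mathcal{C}_a\cap\{b_1,\dots,b_m\}$ is either empty or all of $\{b_1,\dots,b_m\}$.
\begin{itemize}
\item If $b_0\notin\mathcal{C}_a$, the intersection $\mathcal{C}_a\cap S^i(b)$ is $\varnothing$ or $S^i(b)\setminus\{\te_i^{\max}(b)\}$.
\item If $b_0\in\mathcal{C}_a$, the intersection is $\{\te_i^{\max}(b)\}$ or $S^i(b)$.
\end{itemize}
These are exactly the four options in the statement. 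The degenerate case $m=0$ is covered, since then $S^i(b)=\{b_0\}$ and the intersection is $\varnothing$ or $S^i(b)$.

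No step is a serious obstacle. The only point needing care is that the absorption identity $\chke_{\wo}\chke_i=\chke_{\wo}$ is used in the correct order, with $\chke_i$ applied first. This is the order Proposition~\ref{prop:chkwmax-check-stable} provides.
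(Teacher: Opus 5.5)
Your proof is correct and is essentially the paper's argument. Both rest on two facts: $\chke_i$ takes the same value $\tf_i\te_i^{\max}(b)$ on every element of $S^i(b)\setminus\{\te_i^{\max}(b)\}$, and $\chke_{\wo}\chke_i=\chke_{\wo}$ by Proposition~\ref{prop:chkwmax-check-stable}; the paper packages this as showing that one non-top element of the string lying in $\mathcal{C}_a$ forces all non-top elements into $\mathcal{C}_a$, which is the same content as your case analysis.
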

	\begin{proof}
		If $|S^i(b)|=1$, then $S^i(b)=\{b\}=\{\te_i^{\max}(b)\}$.
		It is obvious that
		$\mathcal{C}_a\cap S^i(b)\in\{\varnothing,\ S^i(b)\}$.
		Assume that $|S^i(b)|>1$,
		and $\mathcal{C}_a\cap S^i(b)\notin\{\varnothing,\ \{\te_i^{\max}(b)\}\}$.
		It suffices to show that 
		\begin{align}\label{eq:string-below-top}
			S^i(b)\setminus\{\te_i^{\max}(b)\}\subseteq\mathcal{C}_a\cap S^i(b).
		\end{align}
		Let $c\in \mathcal{C}_a\cap S^i(b)$ be such that $c\ne\te_i^{\max}(b)$,
		and let $d\in S^i(b)\setminus\{\te_i^{\max}(b)\}$.
		Then we have $\chke_{i}(d)=\chke_{i}(c)$.
		Thus, by Proposition~\ref{prop:chkwmax-check-stable},
		we have $\chke_{w_\circ}(d)=\chke_{w_\circ}\chke_{i}(d)=\chke_{w_\circ}\chke_{i}(c)=\chke_{w_\circ}(c)=\chke_{w_\circ}(a)$,
		which implies that $d\in\mathcal{C}_a$.
		Thus we have proved \eqref{eq:string-below-top},
		thereby completing the proof of the proposition.
	\end{proof}
	
	%%%%%%%%%%%%%%%%%%%%%%%%%%%%%%%%%%%%%%%%%%%%%%%%%%%%%%%%%%%%%%%%
	\section{Main result}\label{sec:main-theorems}
	%%%%%%%%%%%%%%%%%%%%%%%%%%%%%%%%%%%%%%%%%%%%%%%%%%%%%%%%%%%%%%%%
	Fix $\lambda\in P^+$.
	We set $\bstar:=\chke_{\wo}(b_{\wo\lambda})$;
	recall that $b_{\wo\lambda}\in\Bl$ is the lowest element.
	Note that $\mathcal{C}_{b_{\star}}=\mathcal{C}_{b_{\wo\lambda}}$ by Proposition~\ref{prop:chkwmax-check-stable}.
	
	\begin{conjecture}[$=$ Conjecture 1]\label{conj:main1-all-types}
		For $\lambda\in P^+$, we have
		$
		\Gl=\F(\bstar)=\mathcal{C}_{b_{\star}}.
		$
		Therefore, by Proposition~\ref{prop:max-proper-extremal}, $\Bl\setminus\F(\bstar)$ is the maximal proper extremal subset
		of $\Bl$.
	\end{conjecture}
	
	\begin{proposition}\label{prop:reduction}
		If $\F(\mathcal{C}_{b_\star})\subseteq\mathcal{C}_{b_\star}$,
		then Conjecture~\ref{conj:main1-all-types} is true.
	\end{proposition}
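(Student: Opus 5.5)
We will prove the chain of inclusions
\[
\F(\bstar)\subseteq\Gl\subseteq\mathcal{C}_{\bstar}\subseteq\F(\bstar).
\]
The first and last inclusions hold for every $\lambda$. The hypothesis $\F(\mathcal{C}_{\bstar})\subseteq\mathcal{C}_{\bstar}$ is used only for the middle one.

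\emph{First inclusion.} Write $\chke_{\wo}=\chke_{i_1}\cdots\chke_{i_\ell}$. Applying Proposition~\ref{prop:chke-invariant} $\ell$ times gives
\[
\langle\bstar\rangle_{\Ext}=\langle b_{\wo\lambda}\rangle_{\Ext}.
\]
The right-hand side equals $\Bl$ by Remark~\ref{rem:bwo-in-Gl}, so $\bstar\in\Gl$. Since $\Gl$ is $\F$-stable (Proposition~\ref{prop:Gl-F-stable}), we get $\F(\bstar)\subseteq\Gl$.

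\emph{Last inclusion.} We show that $c\in\F(\chke_i(c))$ for every $c\in\Bl$ and $i\in I$.
\begin{itemize}
\item If $\te_i(c)=0$, then $\chke_i(c)=c$ and there is nothing to show.
\item If $\varepsilon_i(c)=k\ge1$, then $\chke_i(c)=\tf_i\te_i^{k}(c)=\te_i^{k-1}(c)$. Hence $c=\tf_i^{k-1}\chke_i(c)$, because $\tf_i\te_i=\mathrm{id}$ wherever $\te_i$ is nonzero.
\end{itemize}
Iterating this along $\chke_{\wo}=\chke_{i_1}\cdots\chke_{i_\ell}$, we obtain $c\in\F(\chke_{\wo}(c))$. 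For $c\in\mathcal{C}_{\bstar}$ we have $\chke_{\wo}(c)=\chke_{\wo}(\bstar)=\bstar$, using Proposition~\ref{prop:chkwmax-check-stable}. Therefore $c\in\F(\bstar)$.

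\emph{Middle inclusion.} This is the main step. We show that $X:=\Bl\setminus\mathcal{C}_{\bstar}$ is a proper extremal subset.
\begin{itemize}
\item $X$ is proper, because $b_{\wo\lambda}\in\mathcal{C}_{\bstar}$.
\item For extremality, fix $b\in\Bl$ and $i\in I$. By Proposition~\ref{prop:weak-string-property} applied with $a=\bstar$, the set $\mathcal{C}_{\bstar}\cap S^i(b)$ is one of $\varnothing$, $S^i(b)$, $S^i(b)\setminus\{\te_i^{\max}(b)\}$, or $\{\te_i^{\max}(b)\}$. Correspondingly, $X\cap S^i(b)$ is $S^i(b)$, $\varnothing$, $\{\te_i^{\max}(b)\}$, or $S^i(b)\setminus\{\te_i^{\max}(b)\}$.
\item The only forbidden outcome is the last one with $|S^i(b)|>1$. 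That outcome means $\mathcal{C}_{\bstar}\cap S^i(b)=\{\te_i^{\max}(b)\}$ while $\tf_i\te_i^{\max}(b)\neq0$. The hypothesis $\F(\mathcal{C}_{\bstar})\subseteq\mathcal{C}_{\bstar}$ then forces $\tf_i\te_i^{\max}(b)\in\mathcal{C}_{\bstar}\cap S^i(b)$, a contradiction. (When $|S^i(b)|=1$, this outcome is just $X\cap S^i(b)=\varnothing$.)
\end{itemize}
So $X\in\mathbb{E}_\lambda$ and $X\subsetneq\Bl$. Now take any $b\notin\mathcal{C}_{\bstar}$. Then $\langle b\rangle_{\Ext}\subseteq X\subsetneq\Bl$, so $b\notin\Gl$. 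This proves $\Gl\subseteq\mathcal{C}_{\bstar}$.

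Combining the three inclusions gives $\Gl=\F(\bstar)=\mathcal{C}_{\bstar}$. Proposition~\ref{prop:max-proper-extremal} then identifies $\Bl\setminus\F(\bstar)$ with $E_{\max}$.
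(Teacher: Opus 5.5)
Your proof is correct and is essentially the paper's argument. You get $\bstar\in\Gl$ from Proposition~\ref{prop:chke-invariant} and Remark~\ref{rem:bwo-in-Gl} together with $\F$-stability, you get $\mathcal{C}_{\bstar}\subseteq\F(\bstar)$ by undoing the $\te_i$'s hidden in $\chke_{\wo}$, and you get $\Gl\subseteq\mathcal{C}_{\bstar}$ by showing $\Bl\setminus\mathcal{C}_{\bstar}$ is a proper extremal subset via Proposition~\ref{prop:weak-string-property} and $\tf_i$-stability; arranging these as a cyclic chain, so that the hypothesis is used only in the middle step, is exactly the observation recorded in Remark~\ref{rem:conj-equiv-inclusion}.
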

	\begin{proof}
		First, we show that $\mathcal{C}_{\bstar}=\F(\bstar)$.
		By the assumption, we have $\F(\bstar)\subseteq \mathcal{C}_{\bstar}.$
		Let us show the reverse inclusion.
		Let $b\in \mathcal{C}_{b_{\star}}$.
		Then, $b_{\star}=\chke_{w_\circ}(b_{\star})=\chke_{w_\circ}(b)=\te_{i_1}\cdots\te_{i_m}(b)$
		for some $m\ge0$ and $i_1,\dots,i_m\in I$.
		Then we get $b=\tf_{i_m}\cdots\tf_{i_1}(b_\star)\in \F(b_\star)$, as desired.
		
		Next, we show that $\mathcal{C}_{\bstar}=\Gl$. 
		Let $w_\circ=s_{j_1}\cdots s_{j_\ell}$ be a reduced expression of $w_\circ$.
		We deduce from Proposition~\ref{prop:chke-invariant} that
		\[
		\langle\bstar\rangle_{\Ext}=
		\langle\chke_{w_\circ}(b_{w_\circ\lambda})\rangle_{\Ext}=
		\langle\chke_{j_1}\cdots\chke_{j_\ell}(b_{w_\circ\lambda})\rangle_{\Ext}=
		\langle b_{\wo\lambda}\rangle_{\Ext}=\Bl,
		\]
		which implies that $\bstar\in\Gl$.
		Because $\mathcal{C}_{\bstar}=\F(\bstar)$ as seen above, and $\Gl$ is $\F$-stable by Proposition~\ref{prop:Gl-F-stable},
		it follows that $\mathcal{C}_{\bstar}=\F(\bstar)\subseteq\Gl$.
		Let us show the reverse inclusion $\Gl\subseteq \mathcal{C}_{\bstar}$.
		It follows from Proposition~\ref{prop:weak-string-property} that
		$\mathcal{C}_{\bstar}\cap S^i(b)\in\{\varnothing,\;S^i(b),\;
		S^i(b)\setminus\{\te_i^{\max}(b)\},\;\{\te_i^{\max}(b)\}\}.$
		Because $\mathcal{C}_{\bstar}$ is $\tf_i$-stable by the assumption,
		we deduce that 
		$\mathcal{C}_{\bstar}\cap S^i(b)\in\{\varnothing,\; S^i(b),\;S^i(b)\setminus\{\te_i^{\max}(b)\}\},$
		which implies that
		$\Bl\setminus \mathcal{C}_{\bstar}$ is an extremal subset of $\Bl$.
		Hence, by Proposition~\ref{prop:max-proper-extremal}, we obtain $\Gl\subseteq \mathcal{C}_{\bstar}$, as desired.
		Thus we have shown $\Gl=\mathcal{C}_{\bstar}$,
		thereby completing the proof of the proposition.
	\end{proof}

	\begin{remark}\label{rem:conj-equiv-inclusion}
		The proof above shows, without using the hypothesis of Proposition~\ref{prop:reduction}, that
		$\mathcal{C}_{\bstar}\subseteq\F(\bstar)$ and that $\bstar\in\Gl$;
		combined with the $\F$-stability of $\Gl$ (Proposition~\ref{prop:Gl-F-stable}), this gives
		$\mathcal{C}_{\bstar}\subseteq\F(\bstar)\subseteq\Gl$ unconditionally.
		Hence Conjecture~\ref{conj:main1-all-types} is equivalent to the single inclusion $\Gl\subseteq\mathcal{C}_{\bstar}$.
	\end{remark}

	The following theorem is the main result of this paper.
	
	\begin{theorem}[$=$ Theorem 1]\label{thm:main1}
		If $\g$ is of type $A_{n-1}$,
		then $\F(\mathcal{C}_{\bstar})\subseteq \mathcal{C}_{\bstar}$.
		Therefore, Conjecture~\ref{conj:main1-all-types} is true for type $A_{n-1}$.
	\end{theorem}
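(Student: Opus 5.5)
By Proposition~\ref{prop:reduction}, it suffices to show that $\mathcal{C}_{\bstar}$ is $\tf_i$-stable for every $i\in I$. I would work with the tableau model: $\Bl$ is identified with semistandard tableaux of shape $\lambda$ in the alphabet $\{1,\dots,n\}$, or equivalently with words up to Knuth (plactic) equivalence. The tensor product $b_\rho\otimes b$ then corresponds to a word concatenation, and $\tf_i,\te_i$ act by the usual bracketing rule.

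Via Proposition~\ref{prop:eimax-to-chkw}, the checked operator $\chke_{\wo}$ becomes
\[
b_\rho\otimes\chke_{\wo}(b)=\te^{\max}_{\wo}(b_\rho\otimes b).
\]
The right-hand side is the highest element of the connected component $C(b_\rho\otimes b)$ of $\B(\rho)\otimes\Bl$. So $\chke_{\wo}(c)=\chke_{\wo}(b)$ holds if and only if $b_\rho\otimes c$ and $b_\rho\otimes b$ lie in the same connected component. In type $A$, this is equivalent to the two words $\rw(b_\rho)\,\rw(c)$ and $\rw(b_\rho)\,\rw(b)$ having the same $Q$-tableau, or equivalently the same Greene invariants of all their prefixes.

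The key steps, in order, are as follows.
\begin{enumerate}
\item Characterize $\mathcal{C}_{\bstar}=\mathcal{C}_{b_{\wo\lambda}}$ combinatorially. An element $c$ lies in it exactly when, for every prefix length, the Greene invariants of $\rw(b_\rho)\,\rw(c)$ agree with those of $\rw(b_\rho)\,\rw(b_{\wo\lambda})$. Since $b_{\wo\lambda}$ is the antidominant tableau, these target values can be written down explicitly.
\item Recast this as an extremality condition. For $b_{\wo\lambda}$ the relevant Greene invariants (longest unions of $k$ weakly increasing subsequences) should be as small as possible among all $c$ of shape $\lambda$. Then $\mathcal{C}_{\bstar}$ becomes the set of $c$ for which certain inequalities on Greene invariants are equalities.
\item Show that applying $\tf_i$ to $c$ cannot increase these Greene invariants of the concatenated word. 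Applying $\tf_i$ changes a single letter $i$ into $i+1$. Combined with the minimality from the previous step, this preserves equality, so $\tf_i(c)\in\mathcal{C}_{\bstar}$ whenever $\tf_i(c)\ne0$.
\end{enumerate}
This is the combinatorial Greene-invariant result of Section~\ref{sec:greene}, and it is the natural place for it in the argument.

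I expect the main obstacle to be step 3. Changing $i$ to $i+1$ can both destroy and create increasing subsequences, so monotonicity is not automatic. My first attempt would use Greene's theorem in the form of disjoint increasing subsequences. Given an optimal family of $k$ increasing subsequences in the new word, I would modify it locally near the changed letter, exploiting the fact that $\tf_i$ acts on the unbracketed rightmost $i$. The aim is an equally large family in the old word, or else a family witnessing that the old word already attains the minimum.

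If that fails, I would fall back on decreasing subsequences, via the conjugate-shape version of Greene's theorem. Here the bracketing rule for $\tf_i$ gives a cleaner local injection, which should make the comparison easier to control.
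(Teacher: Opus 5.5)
Your reduction is right, and your way of closing the argument differs from the paper's in a worthwhile way. The paper tracks only the highest weight of $C(b_\rho\otimes c)$, i.e.\ the shape of $P(\rw(c)\,\rw(T_\rho))$. It then needs the PRV theorem (a unique minimal component $V(\nu)$, occurring with multiplicity one) to upgrade equality of highest weights to equality of components. You instead identify components by the whole recording tableau. An unmatched $i$ remains unmatched in every prefix containing it, so the monotonicity applies to every prefix, and the sandwich argument yields equal $Q$-tableaux outright, with no multiplicity-one input.

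Two details of your setup must be corrected, though. First, the word order. In the paper's convention $b_\rho\otimes c=\mathbf{b}(\rw(c)\,\rw(T_\rho))$, so the letters of $c$ come first. The word $\rw(T_\rho)\,\rw(c)$ encodes $c\otimes b_\rho$, and because $\varepsilon_i(b_\rho)=0$ every $\te_i$ then acts on $c$. All these elements therefore lie in the component of $b_\lambda\otimes b_\rho$, and your step~1 criterion would be vacuous. Second, the minimality in step~2 is not something to read off from explicit target values. It follows from step~3 applied along an $\tf$-path from $c$ down to $b_{\wo\lambda}$, exactly as in \eqref{eq:wt-min}. So step~3 must be the unconditional inequality $G_k(u)\ge G_k(u')$ for every word $u$ with an unmatched $i$ at $p^\ast$, where $u'$ raises that letter to $i+1$. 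Your alternative aim of a family ``witnessing that the old word already attains the minimum'' is circular.

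The genuine gap is that this inequality is not established, and the repair you sketch does not work as described. This inequality is the paper's Theorem~\ref{thm:master}, and it carries the entire combinatorial content of the proof. Suppose a chain of an optimal family for $u'$ passes through $p^\ast$ after some $(i+1)$'s at positions $q<p^\ast$; then that chain breaks in $u$. The natural substitutes are the bracket partners $y(q)\in(q,p^\ast)$, but these may already lie on other chains. Fixing that can cascade through several chains, so no local modification suffices, and the same collision occurs in your decreasing-subsequence fallback.

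The paper avoids chain bookkeeping altogether. Via Dilworth, $G_k(u)=\max\{|S| : \ell(S,\preceq_u)\le k\}$ (Lemma~\ref{lem:tworow}). So it is enough to turn an optimal $S'$ for $u'$ into an equally large set with no strictly decreasing subsequence of length $k+1$ in $u$. The paper does this by replacing exactly those $q\in S'$ with $q<p^\ast$, $u_q=i+1$ and $\ell(\operatorname{Left}(q)_{>i+1},\preceq_u)+\ell(\operatorname{Right}(p^\ast)_{<i},\preceq_u)\ge k-1$ by their partners $y(q)$. The bound $\ell(S',\preceq_{u'})\le k$ is what guarantees both that these partners are not already in $S'$ and that no long antichain is created. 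Until you supply an argument of this kind, the proof is missing its central step.
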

	
	We will prove Theorem~\ref{thm:main1} in \S\ref{sec:proof-thm1}.
	
	\newcommand{\SSYT}{\operatorname{SSYT}}
	\newcommand{\Par}{\mathrm{Par}}
	
	%%%%%%%%%%%%%%%%%%%%%%%%%%%%%%%%%%%%%%%%%%%% 
	
	\section{Plactic monoid}\label{sec:plactic}
	In this section, 
	we assume that $\g$ is of type $A_{n-1}$, that is,
	$\g \cong \mathfrak{sl}(n)$.
	Define $\epsilon_{i} \in \mathfrak{h}^{\ast}$, $1\le i\le n$,
	by $\epsilon_{i} (\alpha_{j}^{\vee}):=\delta_{ij}-\delta_{i-1,j}$ for
	$j\in I=\{1,2,\dots,n-1\}$;
	note that
	$\alpha_{i} = \epsilon_{i} - \epsilon_{i+1}$ for $i \in I$,
	$\varpi_{i} = \epsilon_{1} + \epsilon_{2} + \cdots + \epsilon_{i}$ for $i \in I$,
	and $\epsilon_{1} + \epsilon_{2} + \cdots + \epsilon_{n} = 0$.
	Recall (see, for example, \cite{bumpschilling}) that the crystal basis of the vector
	representation $V(\varpi_1)$ of $U_q(\g)=U_q(\mathfrak{sl}(n))$ is given by
	$\mathbf{B}=\B(\varpi_1)=\{\boxed{k}\mid k=1,\dots,n\}$, equipped with 
	\[
	\wt(\boxed{k})=\epsilon_k,\qquad
	\varepsilon_i(\boxed{k})=\delta_{k,i+1},\qquad
	\varphi_i(\boxed{k})=\delta_{k,i},
	\]
	\[
	\tf_i(\boxed{k})=\begin{cases} \boxed{i+1} &\text{if } k=i,\\ 0& \text{otherwise,}\end{cases}
	\qquad
	\te_i(\boxed{k})=\begin{cases} \boxed{i} &\text{if }  k=i+1,\\ 0 &\text{otherwise.}\end{cases}
	\]
	
	We denote by $\Par_{\le n}$ the set of partitions
	$\lambda=(\lambda_1\ge\cdots\ge\lambda_n\ge0)$ of length at most $n$
	(recall that the length of $\lambda$ is defined to be
	$\max(\{1\le t\le n\mid\lambda_t>0\}\sqcup\{0\})$).
	For $\lambda\in\Par_{\le n}$,
	we denote by $\SSYT(\lambda)$ the set of
	semistandard Young tableaux of shape $\lambda$ over $\{1,2,\dots,n\}$, and set
	$\shape(T):=\lambda$ for $T\in\SSYT(\lambda)$.
	Also, we set $\iota(\lambda):=\sum_{k=1}^{n}\lambda_k\epsilon_k\in P^+$ and $|\lambda|:=\sum_{k=1}^{n}\lambda_k$.
	Notice that the map $\Par_{\le n}\rightarrow P^+$,
	$\lambda\mapsto \iota(\lambda)$, is surjective,
	but not injective since $\epsilon_{1}+\cdots+\epsilon_{n}=0$;
	recall that $P^+$ can be identified with the set of partitions of length less than $n$.
	
	\begin{remark}\label{rem:dominance-partition}
		For $\mu=(\mu_1\ge\cdots\ge\mu_n\ge0),\,\nu=(\nu_1\ge\cdots\ge\nu_n\ge0)\in\Par_{\le n}$,
		we write $\mu\dom\nu$ if $\sum_{k=1}^{j}\mu_k\ge\sum_{k=1}^{j}\nu_k$
		for all $1\le j\le n$.
		It is well-known that for $\mu,\nu\in\Par_{\le n}$ with $|\mu|=|\nu|$,
		$\mu\dom\nu$ if and only if $\iota(\mu)\ge\iota(\nu)$ (i.e., $\iota(\mu)-\iota(\nu)\in Q^+$).
	\end{remark}
	
	\begin{definition}\label{def:rw}
		For $T\in\SSYT(\lambda)$, the \emph{row reading word} $\rw(T)$ is the word
		obtained by reading the entries of $T$ row-by-row from bottom to top, and each
		row from left to right.
	\end{definition}
	
	For a word $u=u_1\cdots u_N$ over
	$\{1,2,\dots,n\}$, set
	\[
	\mathbf{b}(u):=\boxed{u_N}\otimes\cdots\otimes\boxed{u_1}\in\mathbf{B}^{\otimes N}.
	\]
	\begin{remark}
		For $T\in\SSYT(\lambda)$, 
		the element $\mathbf{b}(\rw(T))$ is the image of $T$ under the \emph{Middle-Eastern reading};
		see, for example, \cite[Definition 7.3.4(2)]{hongkong}.
	\end{remark}
	
	\begin{theorem}[{\cite[\S 3.1]{bumpschilling}}; see also {\cite[Chapter 7]{hongkong}}]\label{thm:rw-iso}
		Let $\lambda\in\Par_{\le n}$.
		The map
		\[ \Psi_\lambda:\SSYT(\lambda)\longrightarrow\mathbf B^{\otimes|\lambda|},\qquad
		T\mapsto\mathbf{b}(\rw(T)),
		\]
		is injective, and its image is a connected component of $\mathbf B^{\otimes|\lambda|}$
		isomorphic to $\B(\iota(\lambda))$.
	\end{theorem}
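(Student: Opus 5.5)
The plan is to verify three things in turn: injectivity, closedness of the image under the Kashiwara operators, and uniqueness of a highest element in the image. We then conclude with Kashiwara's theorem that a connected component of $\mathbf B^{\otimes N}$ with highest element of weight $\mu$ is isomorphic to $\B(\mu)$.

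\emph{Injectivity.} Since $\lambda$ is fixed, $\rw(T)$ splits uniquely into consecutive segments of lengths $\lambda_\ell,\lambda_{\ell-1},\dots,\lambda_1$, where $\ell$ is the length of $\lambda$. These segments are precisely the rows of $T$ read from bottom to top. Hence $T$ is recovered from $\rw(T)$, and $\mathbf b$ is injective on words, so $\Psi_\lambda$ is injective.

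\emph{Translating the crystal operators.} First I will translate the tensor product rule of \cite[\S 4.4]{hongkong} into the signature rule on the word $u$. Because $\mathbf b(u)$ lists the letters in reverse order, Kashiwara's convention becomes the ``left to right'' reading of $u$. Concretely:
\begin{enumerate}
\item[(a)] assign $+$ to each letter $i$ and $-$ to each letter $i+1$, reading $u$ from left to right;
\item[(b)] cancel adjacent pairs $+-$ repeatedly;
\item[(c)] $\tf_i$ changes the leftmost uncancelled $+$ (a letter $i$) into $i+1$, and $\te_i$ changes the rightmost uncancelled $-$ into $i$.
\end{enumerate}
I need to check this carefully against the paper's convention. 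If the orders come out reversed, the same argument works with left and right swapped.

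\emph{Closedness of the image.} This is the step I expect to be the main technical obstacle: showing that if $T$ is semistandard and $\tf_i(\mathbf b(\rw(T)))\ne0$, then the changed word is again the reading word of a semistandard tableau of shape $\lambda$ (and similarly for $\te_i$). Since only one letter changes, rows stay weakly increasing automatically if we change the correct $i$ in a row, namely the rightmost $i$ among the unbracketed ones in that row. Column strictness can fail only if the entry directly below the changed cell equals $i+1$. In that case, that $i+1$ lies in an earlier-read row and should bracket with the $i$ being changed. I will verify this column by column: within each column, an $i$ sitting directly above an $i+1$ always cancels in the signature. Hence the unbracketed $i$'s and $(i+1)$'s occupy cells where the change preserves semistandardness.

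\emph{Unique highest element.} Suppose $\te_i\Psi_\lambda(T)=0$ for all $i$. Then every letter $i+1$ in $\rw(T)$ is bracketed, so $\rw(T)$ is a lattice (Yamanouchi) word when read in the appropriate direction. Inducting on rows from the top, together with column strictness, this forces row $k$ of $T$ to consist entirely of $k$'s. Thus the image contains exactly one highest element, the superstandard tableau $T_\lambda$, whose weight is $\iota(\lambda)$.

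\emph{Conclusion.} Since $\mathbf B^{\otimes N}$ is a normal crystal, each of its connected components is finite and contains a highest element. The image is a union of connected components, because it is closed under $\te_i$ and $\tf_i$. Each such component has its highest element in the image, and there is only one such element, so the image is a single connected component. By Kashiwara's theorem this component is isomorphic to $\B(\iota(\lambda))$.
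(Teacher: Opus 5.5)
The paper does not prove this statement; it quotes it from Bump--Schilling and Hong--Kang. Your outline is the standard argument of those sources: injectivity, stability of the image under $\te_i,\tf_i$, a unique highest element via the lattice-word condition, then the decomposition of $\mathbf B^{\otimes N}$. Injectivity, the highest-element step and the conclusion are fine, but the closure step is not yet a proof.

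First, your rule (a)--(c) is the reverse of the correct one for $\mathbf b(u)=\boxed{u_N}\otimes\cdots\otimes\boxed{u_1}$ with Kashiwara's tensor product rule. For $u=i\,(i+1)$ one has $\tf_i(\boxed{i+1}\otimes\boxed{i})=\boxed{i+1}\otimes\boxed{i+1}\ne0$, whereas your rule cancels the pair. The correct rule is Proposition~\ref{prop:signature-rule}: an $i+1$ cancels against a later $i$, $\tf_i$ changes the rightmost unmatched $i$, and $\te_i$ changes the leftmost unmatched $i+1$. This matters. Under your stated rule, the column with $i$ above $i+1$ (reading word $(i+1)\,i$) is uncancelled, so $\tf_i$ would produce a column with $i+1$ above $i+1$, and your key claim would be false. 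Your closure paragraph silently uses the correct rule, so fix the convention rather than hedge.

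Second, the claim that an $i$ directly above an $i+1$ is always bracketed is true, but you only assert it, and the reason you give is inaccurate. The $i$ need not bracket with the $i+1$ below it: for rows $[1,1]$ over $[2,2]$ one has $\rw(T)=2211$, and the $1$ in column~1 is matched with the $2$ in column~2. The check is not local to a column; it is a count over two adjacent rows.

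\emph{The count for $\tf_i$.} Suppose $T(r,c)=i$ and $T(r+1,c)=i+1$. Let the $(i+1)$'s of row $r+1$ occupy columns $c_1,\dots,c_2$, and let the $i$'s of row $r$ occupy columns $d_1,\dots,d_2$.
\begin{enumerate}
\item[(i)] Column strictness forces $d_1\ge c_1$.
\item[(ii)] In the subword of $\rw(T)$ on the letters $i,i+1$, the block of $c_2-c_1+1$ letters $i+1$ ending row $r+1$ is immediately followed by the block of $d_2-d_1+1$ letters $i$ beginning row $r$.
\item[(iii)] Hence the first $\min(c_2-c_1+1,\,d_2-d_1+1)$ of those $i$'s are matched.
\item[(iv)] The $i$ in column $c$ is the $(c-d_1+1)$-st of them, and $c-d_1+1\le c_2-c_1+1$, so it is matched.
\end{enumerate}

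\emph{The row condition.} You also need that within a row the matched $i$'s precede the unmatched ones. This holds because a row's $i$'s are consecutive in that subword, and it means the changed $i$ is the \emph{last} $i$ of its row. Being ``the rightmost unbracketed $i$ in the row'' does not by itself keep the row weakly increasing.

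\emph{The mirror statements for $\te_i$.} You need that the changed $i+1$ is the first $i+1$ of its row, and that an $i+1$ directly below an $i$ is matched. Take the $(i+1)$'s of row $r$ in columns $c_1,\dots,c_2$ and the $i$'s of row $r-1$ in columns $d_1,\dots,d_2$. Weak increase of rows and column strictness give $T(r-1,c')=i$ for $c\le c'\le c_2$, hence $d_2\ge c_2$, and the same count applies.

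With these supplied, your proof is complete.
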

	
	By the theorem above,
	we can endow $\SSYT(\lambda)$ with the crystal structure via $\Psi_\lambda$ in such a way that
	$\SSYT(\lambda)\cong\B(\iota(\lambda))$ as crystals.
	
	\begin{definition}\label{def:unmatched}
		Fix $i\in I=\{1,2,\dots,n-1\}$.
		Let $u=u_1\cdots u_N$ be a word over $\{1,2,\dots,n\}$.
		Delete from $u$ every letter other than $i$ and $i+1$, and then delete
		adjacent pairs $(i+1,i)$ repeatedly until no such pair remains.
		A position
		$k\in\{1,2,\dots,N\}$ with $u_k=i$ is an \emph{unmatched} $i$ in $u$ if it survives this
		procedure.
	\end{definition}
	
	\begin{proposition}[{\cite{bumpschilling}}]\label{prop:signature-rule}
		For $u=u_1\cdots u_N$ a word over $\{1,2,\dots,n\}$, we have
		$$\tf_i\bigl(\mathbf{b}(u)\bigr)=\boxed{u_N}\otimes\cdots\otimes \tf_i(\;\boxed{u_j}\;)\otimes\cdots\otimes \boxed{u_1},$$
		where the position $j$ is the right-most unmatched $i$ in the word $u$.
		If $u$ has no unmatched $i$, then $\tf_i(\mathbf{b}(u))=0$.
	\end{proposition}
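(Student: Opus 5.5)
The plan is to prove the proposition by induction on $N$, using the tensor product rule of \cite[\S 4.4]{hongkong} for two factors. We strengthen the statement so that the induction carries through. For a word $u$, let $a_i(u)$ be the number of unmatched letters $i$ and $a_{i+1}(u)$ the number of unmatched letters $i+1$ after the cancellation of Definition~\ref{def:unmatched}. The reduced word is then $(i)^{a_i(u)}(i+1)^{a_{i+1}(u)}$: all surviving $i$'s lie to the left of all surviving $i+1$'s, since otherwise an adjacent pair $(i+1,i)$ would remain. We prove simultaneously that
\[
\varphi_i(\mathbf{b}(u))=a_i(u),\qquad \varepsilon_i(\mathbf{b}(u))=a_{i+1}(u),
\]
and that $\tf_i$ acts as stated.

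The base case $N=1$ is the explicit description of $\mathbf{B}$ given above: $\varphi_i(\boxed{k})=\delta_{k,i}$ and $\varepsilon_i(\boxed{k})=\delta_{k,i+1}$, and $\tf_i$ changes $\boxed{i}$ to $\boxed{i+1}$. For the inductive step, write $u''=u_1\cdots u_{N-1}$, so that
\[
\mathbf{b}(u)=\boxed{u_N}\otimes\mathbf{b}(u'').
\]
In the convention of \cite{hongkong}, for $b_1\otimes b_2$ we have
\[
\tf_i(b_1\otimes b_2)=
\begin{cases}
\tf_i b_1\otimes b_2 & \text{if } \varphi_i(b_1)>\varepsilon_i(b_2),\\
b_1\otimes\tf_i b_2 & \text{otherwise},
\end{cases}
\]
together with
\[
\varphi_i(b_1\otimes b_2)=\varphi_i(b_2)+\max\{0,\varphi_i(b_1)-\varepsilon_i(b_2)\},\qquad
\varepsilon_i(b_1\otimes b_2)=\varepsilon_i(b_1)+\max\{0,\varepsilon_i(b_2)-\varphi_i(b_1)\}.
\]
Apply these with $b_1=\boxed{u_N}$ and $b_2=\mathbf{b}(u'')$, where by the induction hypothesis $\varepsilon_i(b_2)=a_{i+1}(u'')$ and $\varphi_i(b_2)=a_i(u'')$.

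On the word side, appending the last letter $u_N$ to $u''$ changes the reduced word as follows. If $u_N\notin\{i,i+1\}$, nothing changes. If $u_N=i+1$, one more unmatched $i+1$ is added. If $u_N=i$, the new $i$ cancels against one surviving $i+1$ when $a_{i+1}(u'')>0$; otherwise it becomes a new unmatched $i$, and it is then the right-most unmatched $i$. Comparing case by case with the formulas above gives the claimed values of $\varphi_i$ and $\varepsilon_i$ for $u$. Indeed, $\tf_i$ acts on the factor $\boxed{u_N}$ exactly when $u_N=i$ and $a_{i+1}(u'')=0$, that is, exactly when position $N$ is an unmatched $i$ in $u$.

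If $\tf_i$ does not act on $\boxed{u_N}$, it acts on $\mathbf{b}(u'')$. By induction it changes the right-most unmatched $i$ of $u''$, or gives $0$ if there is none. When $u_N\ne i$, or $u_N=i$ is matched, the unmatched $i$'s of $u$ and of $u''$ are the same positions. Since $\boxed{u_N}\otimes 0=0$, this settles the vanishing case as well.

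The only delicate step is the bookkeeping that the cancellation process is compatible with appending a letter. The final reduced word does not depend on the order in which the adjacent pairs $(i+1,i)$ are deleted, because a bracket-matching procedure is confluent. Hence we may process the letters left to right, with a stack of pending $i+1$'s. This is what justifies the case analysis above, and the rest is direct comparison with the tensor product rule.
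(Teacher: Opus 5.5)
The paper gives no proof of this proposition; it cites the signature rule from Bump--Schilling. The reversed order $\mathbf{b}(u)=\boxed{u_N}\otimes\cdots\otimes\boxed{u_1}$ is what translates Bump--Schilling's tensor convention into the Kashiwara/Hong--Kang convention used elsewhere in the paper (for instance in Proposition~\ref{prop:eimax-to-chkw}).

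Your proposal is a correct, self-contained derivation of the rule directly in the paper's own convention. You induct on $N$ via $\mathbf{b}(u)=\boxed{u_N}\otimes\mathbf{b}(u'')$, strengthening the statement to track $\varphi_i=a_i(u)$ and $\varepsilon_i=a_{i+1}(u)$, and your three cases ($u_N\notin\{i,i+1\}$, $u_N=i+1$, $u_N=i$) agree with the Hong--Kang formulas. In particular, $\tf_i$ lands on $\boxed{u_N}$ iff $1>a_{i+1}(u'')$. That is exactly when position $N$ is an unmatched $i$, and such a position is automatically the right-most unmatched $i$.

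Compared with the citation, your route buys two things. It removes any doubt about the convention translation. It also shows that ``unmatched'' in Definition~\ref{def:unmatched} is well defined, which the paper tacitly assumes.

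On that second point, state the confluence claim for the set of surviving positions, not just for the reduced string $i^{a}(i+1)^{b}$. The argument is short. Two distinct deletable adjacent pairs $(i+1,i)$ in the current word are necessarily disjoint, and each remains adjacent after the other is deleted. So the rewriting is locally confluent, and it terminates. Hence the surviving positions do not depend on the deletion order, which is exactly what licenses your left-to-right stack processing in the inductive step.
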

	
	\begin{definition}
		Let $\B_1,\B_2$ be crystals, and $b_i\in\B_i$ for $i=1,2$;
		recall that $C(b_i)$ denotes the connected component of $\B_i$ containing $b_i$.
		If there exists a (unique) isomorphism  $C(b_1)\xrightarrow{\ \sim\ }C(b_2)$ of crystals which maps $b_1$ to $b_2$,
		then we say that $b_1$ and $b_2$ are \emph{plactically equivalent},
		and write $b_1\equiv b_2$.
	\end{definition}
	\begin{remark}\label{rem:T-equiv-boxT}
		\begin{enumerate}
			\item[(i)] If $b_1\equiv b_2$ and $c_1\equiv c_2$,
			then $b_1\otimes c_1\equiv b_2\otimes c_2$.
			\item[(ii)] 
			By Theorem~\ref{thm:rw-iso}, 
			we have $T\equiv \mathbf{b}(\rw(T))$
			for $T\in\SSYT(\lambda)$.
		\end{enumerate}
	\end{remark}
	
	For $T\in\SSYT(\lambda)$ and a letter $i\in\{1,2,\dots,n\}$,
	we denote by $T\leftarrow i$
	the semistandard Young tableau obtained from $T$ by the \emph{Schensted insertion} of
	$i$ (see, for example, \cite[\S 7]{bumpschilling});
	for a word $w=w_1\cdots w_N$, we define
	\[
	\bigl(T\leftarrow w\bigr):=\bigl(\cdots(T\leftarrow w_1)\cdots\leftarrow w_N\bigr),
	\qquad
	P(w):=\bigl(\varnothing\leftarrow w\bigr);
	\]
	we call $P(w)$ the \emph{insertion tableau} of $w$.
	
	\begin{theorem}[{see, for example, \cite[\S 8.3]{bumpschilling}}]\label{thm:plactic-theory}
		Let $T\in\SSYT(\lambda)$.
		For $i\in\{1,2,\dots,n\}$,
		we have $\bigl(T\leftarrow i\bigr)\equiv \boxed{i}\otimes T.$
	\end{theorem}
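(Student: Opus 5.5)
The plan is to reduce the statement to the Knuth relations on words and then check that each elementary Knuth move is a crystal isomorphism. By Remark~\ref{rem:T-equiv-boxT}(ii), $\bigl(T\leftarrow i\bigr)\equiv\mathbf{b}(\rw(T\leftarrow i))$ and $T\equiv\mathbf{b}(\rw(T))$. Since $\mathbf{b}$ reverses the order of the letters, $\boxed{i}\otimes\mathbf{b}(\rw(T))=\mathbf{b}(\rw(T)\,i)$, where $\rw(T)\,i$ is the word $\rw(T)$ with the letter $i$ appended on the right. Using Remark~\ref{rem:T-equiv-boxT}(i), it therefore suffices to prove
\[
\mathbf{b}\bigl(\rw(T\leftarrow i)\bigr)\equiv\mathbf{b}\bigl(\rw(T)\,i\bigr).
\]

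\emph{Step 1: elementary Knuth moves.} Consider the Knuth relations $yzx\sim yxz$ for $x<y\le z$, and $xzy\sim zxy$ for $x\le y<z$. I will show that if $u=u'\,abc\,u''$ and $v=u'\,a'b'c'\,u''$ differ by one such move, then $\mathbf{b}(u)\equiv\mathbf{b}(v)$. By Remark~\ref{rem:T-equiv-boxT}(i), this reduces to the case of three letters, i.e.\ to elements of $\mathbf{B}^{\otimes3}$.

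For three-letter words, I will use the signature rule (Proposition~\ref{prop:signature-rule}) to show two things. First, the two words have the same number of unmatched $i$'s and unmatched $(i+1)$'s, so $\varepsilon_i$ and $\varphi_i$ agree. Second, applying $\tf_i$ (resp.\ $\te_i$) to each word gives a pair of words that are again related by the same type of Knuth move. The map $yzx\mapsto yxz$ (resp.\ $xzy\mapsto zxy$) then extends to a bijection between the two connected components that commutes with all $\tf_i,\te_i$, and this gives $\equiv$.

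\emph{Step 2: insertion as Knuth moves.} Row insertion of a letter $i$ into a weakly increasing row $r=r_1\cdots r_k$ either appends $i$, or bumps the leftmost $r_j>i$, giving a new row $r'$. In the latter case, the classical computation shows that $r\,i\sim r_j\,r'$ by a sequence of Knuth moves: first moves of the second kind slide $i$ leftward past the letters $>i$, then moves of the first kind move $r_j$ to the front. Writing $\rw(T)=R_m\cdots R_1$ (bottom row first), I will induct on the rows. The bumped letter from row $R_1$ is carried into the prefix $R_m\cdots R_2$ and inserted into $R_2$, and so on. Since each move is local and plactic equivalence is compatible with concatenation by Step 1, we get $\rw(T)\,i\sim\rw(T\leftarrow i)$, and hence the desired $\equiv$.

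The main obstacle is the case check in Step 1, namely verifying that $\tf_i$ maps a Knuth pair to a Knuth pair of the same type with the inequalities still holding. The delicate cases are those where two of the letters lie in $\{i,i+1\}$, for instance $x=i$, $y=z=i+1$, or $x=y=i$, $z=i+1$. For each relation I plan to enumerate the possible $i$-signatures of the triple and confirm that the rightmost unmatched $i$ occupies corresponding positions in the two words.
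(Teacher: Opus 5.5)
Your reduction to $\mathbf{b}(\rw(T)\,i)\equiv\mathbf{b}(\rw(T\leftarrow i))$ is fine, and so is your overall strategy: Knuth moves preserve $\equiv$, and row insertion is a chain of Knuth moves. This is the standard argument behind the result, which the paper does not prove but quotes from Bump--Schilling. Step~2 is also fine apart from swapped labels. With your conventions, sliding $i$ leftward ($r_{m-1}r_m\,i\mapsto r_{m-1}\,i\,r_m$) is a move of the first kind $yzx\mapsto yxz$. Carrying $r_j$ to the front ($r_{j-1}r_j\,i\mapsto r_j\,r_{j-1}\,i$) is a move of the second kind.

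The genuine problem is the key claim of Step~1: that $\tf_i$ and $\te_i$ carry a Knuth pair to a Knuth pair \emph{of the same type}, acting at corresponding positions. This fails in exactly the delicate cases you flag. Take the first-kind pair $(i{+}1)(i{+}1)\,i\sim(i{+}1)\,i\,(i{+}1)$, that is, $x=i$ and $y=z=i+1$. Both words have $\varphi_i=0$ and $\varepsilon_i=1$. However, $\te_i$ changes $y$ in the first word and $z$ in the second, producing $i\,(i{+}1)\,i$ and $(i{+}1)\,i\,i$. Neither word has the shape $yzx$ or $yxz$ with $x<y\le z$. Instead they form the second-kind pair with $x=y=i$, $z=i+1$, and $\tf_i$ sends that pair back to the first-kind one. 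Consequently ``swap the last two letters'' does not commute with $\te_i$ on $C(\mathbf{b}((i{+}1)(i{+}1)\,i))$, and the map $yzx\mapsto yxz$ you propose to extend is not well defined on the component.

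The fix is to treat both relations together. For a three-letter word $abc$, consider the four conditions:
\begin{itemize}
\item $c<a\le b$ or $b<a\le c$ (first kind, partner $acb$);
\item $a\le c<b$ or $b\le c<a$ (second kind, partner $bac$).
\end{itemize}
These are mutually exclusive, so every three-letter word has at most one elementary Knuth partner. Hence ``replace a word by its Knuth partner'' is a well-defined partial involution $\kappa$.

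Your case analysis then goes through and shows three things. First, $\kappa$ preserves $\varepsilon_i$ and $\varphi_i$. Second, if $w$ is in the domain of $\kappa$ and $\tf_i w\ne0$ (resp.\ $\te_i w\ne0$), then $\tf_i w$ (resp.\ $\te_i w$) is again in the domain and $\kappa$ commutes with the operator. Third, with at most two letters in $\{i,i+1\}$ both type and positions are preserved; the only type changes are the two all-$\{i,i+1\}$ cases above, which $\te_i$ and $\tf_i$ exchange.

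Following paths from $\mathbf{b}(u)$, $\kappa$ is defined on all of $C(\mathbf{b}(u))$ and maps it into $C(\mathbf{b}(v))$. By symmetry it is a crystal isomorphism onto $C(\mathbf{b}(v))$ sending $\mathbf{b}(u)$ to $\mathbf{b}(v)$. With this modification your proof is complete.
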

	\begin{remark}\label{rem:word-component}
		Let $u=u_1\cdots u_N$ be an arbitrary word over $\{1,2,\dots,n\}$ with $N\ge1$.
		Then,
		\begin{align*}
		P(u)&=\bigl(\varnothing\leftarrow u\bigr)
		=\bigl(\cdots(\varnothing\leftarrow u_1)\cdots\leftarrow u_N\bigr)\\
		&\equiv \boxed{u_N}\otimes \bigl(\cdots(\varnothing\leftarrow u_1)\cdots\leftarrow u_{N-1}\bigr)\\
		&\equiv \boxed{u_N}\otimes \boxed{u_{N-1}}\otimes \bigl(\cdots(\varnothing\leftarrow u_1)\cdots\leftarrow u_{N-2}\bigr)\\
		&\equiv \cdots \equiv \boxed{u_N}\otimes \boxed{u_{N-1}}\otimes \cdots\otimes \boxed{u_1}=\mathbf{b}(u);
		\end{align*} 
		if we set $\mu:=\shape(P(u))$,
		then the element $\mathbf{b}(u)$ lies in a connected component of $\mathbf{B}^{\otimes N}$
		isomorphic to $\B(\iota(\mu))$.
	\end{remark}
	
	   For words $u=u_1\cdots u_N$ and $w=w_1\cdots w_M$,
	   we define $uw$ to be the concatenation $u_1\cdots u_N w_1\cdots w_M$.
	
	\begin{definition}[plactic product]\label{def:plactic-product}
		For $\mu,\nu\in\Par_{\le n}$, $T\in\SSYT(\mu)$ and $U\in\SSYT(\nu)$,
		we set
		\[
		T\cdot U \;:=\; P\bigl(\rw(T)\,\rw(U)\bigr) \;=\; \bigl(T\leftarrow \rw(U)\bigr).
		\]
	\end{definition}
	
	\begin{remark}\label{rem:plactic-tensor}
		For $\mu,\nu\in\Par_{\le n}$, $T\in\SSYT(\mu)$ and $U\in\SSYT(\nu)$, we have
		\[
		T\cdot U=P\bigl(\rw(T)\,\rw(U)\bigr)\equiv\mathbf{b}\bigl(\rw(T)\,\rw(U)\bigr)
		=\mathbf{b}(\rw(U))\otimes\mathbf{b}(\rw(T))\equiv U\otimes T,
		\]
		where the first $\equiv$ follows from Remark~\ref{rem:word-component},
		and the second $\equiv$ from Remark~\ref{rem:T-equiv-boxT}.
	\end{remark}
	
	\section{Greene invariants}\label{sec:greene}
	
	\newcommand{\Pos}{\operatorname{Pos}}
	
	\begin{definition}\label{def:greene-inv.}
		Let $u=u_1\cdots u_N$ be a word over $\mathbb{Z}_{>0}$,
		and $\Pos(u):=\{1,2,\dots,N\}$ the set of positions of $u$.
		\begin{enumerate}
			\item[(i)] A \emph{weakly increasing subsequence} (resp., \emph{strictly decreasing subsequence}) of $u$ is a subset
			$\{q_1,\dots,q_r\}$ of $\Pos(u)$ with $r\ge0$ and $q_1<\cdots<q_r$ satisfying the condition that 
			$$u_{q_1}\le\dots\le u_{q_r}\quad
			(\text{ resp., }u_{q_1}>\dots> u_{q_r})\;;
			$$
			note that the empty set is also a weakly increasing subsequence (resp., strictly decreasing subsequence) of $u$.
			\item[(ii)]
			For $k\ge1$, 
			the Greene invariant $G_k(u)$ of $u$ is defined to be
			\[
			G_k(u):=\max_{C_1,\dots,C_k}\ \sum_{t=1}^k|C_t|,
			\]
			where $C_1,\dots,C_k$ range over all pairwise disjoint weakly increasing subsequences of $u$. 
		\end{enumerate}
	\end{definition}
	
	\begin{theorem}[Greene {\cite{greene1974}}]\label{thm:greene}
		Let $u$ be a word over $\{1,2,\dots,n\}$, and write $\mu:=\shape(P(u))\in\Par_{\le n}$ as $\mu=(\mu_1\ge\cdots\ge \mu_n\ge 0)$.
		For $k\ge1$, we have
		$G_k(u)=\mu_1+\cdots+\mu_k,$
		where we set $\mu_t:=0$ for $t\ge n+1.$
	\end{theorem}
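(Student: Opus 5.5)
We prove Greene's theorem by reducing to row reading words of tableaux, using the fact that $G_k$ is invariant under Knuth moves. There are three steps.

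\emph{Step 1 (reduction to a reading word).} Recall the elementary Knuth relations on words:
\[
\cdots yzx\cdots\;\sim\;\cdots yxz\cdots\quad (x<y\le z),\qquad \cdots xzy\cdots\;\sim\;\cdots zxy\cdots\quad (x\le y<z).
\]
Knuth's classical theorem states that $u$ is connected to $\rw(P(u))$ by a chain of such moves. This is proved step by step: one checks that each row insertion $T\leftarrow i$ is realized on $\rw(T)\,i$ by a sequence of elementary relations. We may also take it from the same references as Theorem~\ref{thm:plactic-theory}. Granting Step~2, this gives $G_k(u)=G_k(\rw(P(u)))$, so it suffices to treat $u=\rw(T)$ for $T\in\SSYT(\mu)$.

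\emph{Step 2 (invariance of $G_k$ under Knuth moves).} This is the main obstacle. For each relation we show that $G_k$ does not increase in either direction, by converting a family of $k$ disjoint weakly increasing subsequences in one word into a family of the same total size in the other. Take $yzx\to yxz$ with $x<y\le z$.

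\emph{From $yzx$ to $yxz$.} In $yzx$ the letters $z,x$ never lie in a common weakly increasing subsequence, since $z>x$. Hence every subsequence of $yzx$ remains weakly increasing after $z$ and $x$ are swapped.

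\emph{From $yxz$ to $yzx$.} Here the only problem is a subsequence $C=A\,x\,z\,B$ containing both $x$ and $z$.
\begin{itemize}
\item If $y$ is in no chosen subsequence, replace $x$ by $y$ in $C$. This gives $A\,y\,z\,B$, which is valid because $A\le x<y\le z$.
\item If $y$ lies in another chosen subsequence $C'=A'\,y\,B'$, exchange tails: take $A'\,y\,z\,B$ and $A\,x\,B'$. The first is weakly increasing since $y\le z$. The second is weakly increasing since $A\le x<y\le B'$, and its letters appear in the correct order in $yzx$.
\end{itemize}
In both cases sizes and disjointness are preserved. The relation $xzy\sim zxy$ is handled symmetrically: there the problematic pair is $x,z$ in $xzy$, and we exchange using $y$, with $x\le y<z$.

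\emph{Step 3 (computation for $u=\rw(T)$).}

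\emph{Lower bound.} The top $k$ rows of $T$ are disjoint weakly increasing subsequences of $\rw(T)$, with total size $\mu_1+\cdots+\mu_k$.

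\emph{Upper bound.} Each column of $T$ occurs in $\rw(T)$ as a strictly decreasing subsequence: the lower entries are larger and are read earlier. A weakly increasing subsequence therefore meets each column in at most one position. So $k$ disjoint weakly increasing subsequences contain at most $\min(k,\mu'_c)$ entries of column $c$, where $\mu'_c$ is its length. Summing over columns gives
\[
\sum_c\min(k,\mu'_c)=\mu_1+\cdots+\mu_k .
\]
This proves $G_k(u)=\mu_1+\cdots+\mu_k$, with $\mu_t=0$ for $t>n$ automatic since $T$ has at most $n$ rows.
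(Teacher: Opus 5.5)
Your proof is correct. It takes a genuinely different route from the paper only in the trivial sense that the paper gives no proof of this statement at all: it quotes the result from Greene and uses it as a black box.

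What you wrote is the standard self-contained argument, as in Fulton's \emph{Young Tableaux}, \S3.1:
\begin{itemize}
\item reduce to $\rw(P(u))$ by Knuth's theorem;
\item show $G_k$ is invariant under the elementary Knuth moves $yzx\sim yxz$ ($x<y\le z$) and $xzy\sim zxy$ ($x\le y<z$) by the tail-exchange argument;
\item compute $G_k(\rw(T))$ directly, using the top $k$ rows for the lower bound and the columns for the upper bound, since columns are read as strictly decreasing sequences.
\end{itemize}

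I checked the exchange cases and they hold. For the second relation, your ``symmetric'' treatment is justified: reversing the word and complementing the alphabet ($a\mapsto n+1-a$) sends weakly increasing subsequences to weakly increasing subsequences and carries $yzx\sim yxz$ onto $xzy\sim zxy$.

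Two small points should be made explicit.
\begin{itemize}
\item In the case analysis for $yxz\to yzx$, you use without saying so that $y\notin C$. This holds because $y>x$ and $y$ precedes $x$. It is what makes the dichotomy ``$y$ in no subsequence'' versus ``$y$ in another subsequence $C'$'' exhaustive. Likewise, $B'$ avoids the positions of $x$ and $z$ because $C'\ne C$.
\item Step~1 needs Knuth's word-level statement that $u$ is connected to $\rw(P(u))$ by elementary Knuth moves. It does not need the crystal-theoretic relation $\equiv$ of Theorem~\ref{thm:plactic-theory}, and that relation alone says nothing about $G_k$. So cite the Knuth-relation form of the result explicitly.
\end{itemize}

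The trade-off between the two treatments is simple. The paper's citation keeps the exposition short. Your route costs about a page but makes the link between insertion shape and Greene invariants self-contained, using only elementary exchange arguments.
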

	
	\newcommand{\Ch}{\operatorname{Ch}}
	\newcommand{\ACh}{\operatorname{ACh}}
	
	\begin{definition}
		Let $(P,\le)$ be a poset.
		A \emph{chain} (resp., \emph{antichain}) is, by definition, a subset $Q$ of $P$ satisfying the condition that $x$ and $y$ are comparable (resp., not comparable) for every $x,y \in Q$ with $x\ne y$;
		note that a singleton (i.e., a subset $Q$ of $P$ such that $|Q|=1$) and the empty set are chains and antichains.
		Denote by $\Ch(P,\le)$ (resp., $\ACh(P,\le)$) the set of chains (resp., antichains) of $(P,\le)$.
		
	\end{definition}
	
	Let $(P,\le)$ be a finite poset.
	Set
	$$\mathcal{O}(P):=\Bigl\{\mathbf{C}=(C_1,\dots,C_k)\;\Bigm|\;k\ge1,\;C_i\in\Ch(P,\le)\ (i=1,\dots,k),\;\bigcup_{i=1}^k C_i=P\Bigr\},$$
	and $L(\mathbf{C}):=k$ for $\mathbf{C}=(C_1,\dots,C_k)\in \mathcal{O}(P)$.
	
	\begin{theorem}[Dilworth {\cite{dilworth1950}}]\label{thm:dilworth}
		Let $(P,\le)$ be a nonempty finite poset.
		We have 
		\[
		\max_{A\in \ACh(P,\le)}|A|=\min_{\mathbf{C}\in \mathcal{O}(P)}L(\mathbf{C}).
		\]
	\end{theorem}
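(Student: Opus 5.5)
The inequality $\max_{A}|A|\le\min_{\mathbf C}L(\mathbf C)$ is immediate. If $A\in\ACh(P,\le)$ and $\mathbf C=(C_1,\dots,C_k)\in\mathcal O(P)$, then every element of $A$ lies in some $C_i$. Two distinct elements of $A$ cannot lie in the same chain $C_i$, since they would then be comparable. So the map sending each $a\in A$ to some index $i$ with $a\in C_i$ is injective, and hence $|A|\le k$. The whole difficulty is the reverse inequality: we must exhibit a cover of $P$ by $m$ chains, where $m:=\max_{A\in\ACh(P,\le)}|A|$.

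For the reverse inequality I would use strong induction on $|P|$, following Galvin's short argument. The case $|P|=1$ is trivial. For $|P|\ge2$:
\begin{enumerate}
\item[(1)] Pick a maximal element $x\in P$ and set $P':=P\setminus\{x\}$. If $P'$ is empty we are done. Otherwise let $m'$ be the width of $P'$; by induction $P'$ is covered by $m'$ chains, which we may take pairwise disjoint, say $C_1,\dots,C_{m'}$.
\item[(2)] Every antichain of $P'$ of size $m'$ meets each $C_i$ in exactly one element. Let $x_i$ be the largest element of $C_i$ that belongs to some maximum antichain of $P'$.
\item[(3)] Show that $A_0:=\{x_1,\dots,x_{m'}\}$ is itself an antichain. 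This is the key step. Suppose $x_i\le x_j$ for some $i\ne j$, and let $A$ be a maximum antichain of $P'$ containing $x_j$. Then $A\cap C_i=\{y\}$ with $y\le x_i$ by the choice of $x_i$, so $y\le x_i\le x_j$. Both $y$ and $x_j$ lie in $A$ and are distinct (they lie in the disjoint chains $C_i$ and $C_j$), so this comparability contradicts $A$ being an antichain.
\item[(4)] Conclude by cases on $A_0\cup\{x\}$.
\end{enumerate}

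In step (4), first suppose $A_0\cup\{x\}$ is an antichain. Then the width of $P$ is $m'+1$, and the chains $C_1,\dots,C_{m'},\{x\}$ give a cover of the right length.

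Otherwise $x\ge x_j$ for some $j$, by maximality of $x$. Set $K:=\{x\}\cup\{z\in C_j\mid z\le x_j\}$, which is a chain. Every antichain of $P\setminus K$ of size $m'$ is a maximum antichain of $P'$, so it must meet $C_j$ in an element $y$ with $y\le x_j$, which lies in $K$ — a contradiction. Hence $P\setminus K$ has width at most $m'-1$. By induction it is covered by at most $m'-1$ chains, and adding $K$ gives a cover of $P$ by $m'$ chains. Since $m\ge m'$, in both cases $\min_{\mathbf C}L(\mathbf C)\le m$.

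The expected obstacle is step (3), together with the bookkeeping in the second case of step (4). One must check carefully that removing $K$ strictly decreases the width, and that the case $P\setminus K=\varnothing$ (where the cover is just $(K)$) is handled consistently with the convention $k\ge1$ in $\mathcal O(P)$.

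As a fallback, I would reduce to König's theorem on the bipartite graph with parts $P^-$, $P^+$ and edges $a^-b^+$ for $a<b$. A maximum matching of size $s$ yields a chain cover with $|P|-s$ chains, and a minimum vertex cover yields an antichain of size at least $|P|-s$.
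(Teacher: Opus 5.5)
Your proof is correct. It is Galvin's inductive proof of Dilworth's theorem, and you handle the points that need care:
\begin{itemize}
\item making the chains covering $P'$ pairwise disjoint;
\item checking that each $C_i$ contains an element of some maximum antichain, so that $x_i$ exists;
\item using maximality of $x$ to get $x\ge x_j$ rather than $x\le x_j$;
\item the degenerate case $P\setminus K=\varnothing$.
\end{itemize}

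The paper does not prove this statement. It cites Dilworth's 1950 paper and uses the theorem as a black box, so your proposal supplies a self-contained proof where the paper relies on the literature.

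Note how little of the theorem the paper actually uses. It invokes Dilworth only in the proof of Lemma~\ref{lem:tworow}. There it needs only the direction ``a poset of width $t$ is covered by $t$ chains'', and only for subposets $(S,\preceq_u)$ with $S\subseteq\operatorname{Pos}(u)$.

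For these special posets a much shorter argument suffices, because incomparability (positions $p<q$ with $u_p>u_q$) is itself a transitive relation. The argument runs as follows.
\begin{itemize}
\item For $q\in S$, let $h(q)$ be the largest size of a strictly decreasing subsequence of $u$ that is contained in $S$ and has last position $q$.
\item If $p<q$ lie in $S$ with $u_p>u_q$, then appending $q$ to a longest such subsequence ending at $p$ shows $h(q)>h(p)$.
\item Hence each level set of $h$ is a weakly increasing subsequence, i.e.\ a chain.
\item The $\ell(S,\preceq_u)$ level sets then cover $S$.
\end{itemize}
Your general argument buys independence from this special two-dimensional structure; the greedy labeling buys brevity for exactly what the paper needs.
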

	
	Fix a word $u=u_1\cdots u_N$ over $\mathbb{Z}_{>0}$.
	We define the finite poset $(\operatorname{Pos}(u),\preceq_u):=(\{1,2,\dots,N\},\preceq_u)$ of positions as follows:
	for $p,q\in \operatorname{Pos}(u)$, 
	$p\preceq_u q$ if $p\le q$ and $u_p\le u_q$.
	We note that a chain of $\operatorname{Pos}(u)$ is a weakly increasing subsequence of $u$,
	and an antichain of $\operatorname{Pos}(u)$ is a strictly decreasing subsequence of $u$
	(Definition~\ref{def:greene-inv.}).
	
	Let $S\subseteq\operatorname{Pos}(u)$,
	and let $(S,\preceq_u)$ be the subposet induced by $(\operatorname{Pos}(u),\preceq_u)$.
	We set 
	$$\ell(S,\preceq_u):=\max_{A\in\ACh(S,\preceq_u)}|A|;$$
	note that 
	$\ell(\varnothing,\preceq_u)=0$ since $\ACh(\varnothing,\preceq_u)=\{\varnothing\}$.
	
	\begin{lemma}\label{lem:tworow}
		Let $u=u_1\cdots u_N$ be a word over $\mathbb{Z}_{>0}$.
		For $k\ge1$, we have
		$G_k(u)=\max\{|S|\mid S\subseteq\operatorname{Pos}(u),\ \ell(S,\preceq_u)\le k\}.$
	\end{lemma}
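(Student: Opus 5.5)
We prove the two inequalities separately, comparing with the definition of $G_k(u)$ as a maximum over $k$ pairwise disjoint weakly increasing subsequences. Recall that chains of $(\Pos(u),\preceq_u)$ are exactly the weakly increasing subsequences of $u$, and antichains are exactly the strictly decreasing ones. The same is true inside any subposet $(S,\preceq_u)$, since it is an induced subposet.

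For the inequality $G_k(u)\le\max\{|S|\mid \ell(S,\preceq_u)\le k\}$, take pairwise disjoint weakly increasing subsequences $C_1,\dots,C_k$ attaining $G_k(u)$, and set $S:=C_1\sqcup\cdots\sqcup C_k$, so $|S|=G_k(u)$. Any antichain $A$ of $(S,\preceq_u)$ meets each chain $C_t$ in at most one element, because two distinct elements of $A$ are incomparable while any two elements of $C_t$ are comparable. Hence $|A|\le k$, so $\ell(S,\preceq_u)\le k$, and this $S$ is admissible.

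For the reverse inequality, let $S$ be any subset with $\ell(S,\preceq_u)\le k$. If $S=\varnothing$ there is nothing to prove, since $G_k(u)\ge0$. Otherwise apply Dilworth's theorem (Theorem~\ref{thm:dilworth}) to the nonempty finite poset $(S,\preceq_u)$. It gives chains $C_1,\dots,C_m$ of $S$ with $m=\ell(S,\preceq_u)\le k$ and $\bigcup_t C_t=S$. We then make them disjoint by replacing $C_t$ with $C_t\setminus(C_1\cup\cdots\cup C_{t-1})$; a subset of a chain is still a chain. We pad with empty chains $C_{m+1}=\cdots=C_k=\varnothing$, which are allowed as weakly increasing subsequences. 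This yields $k$ pairwise disjoint weakly increasing subsequences of $u$ whose sizes sum to $|S|$, so $|S|\le G_k(u)$.

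Combining the two inequalities proves the lemma. There is no real obstacle. The only points needing care are that Dilworth's theorem requires a nonempty poset, so the case $S=\varnothing$ is handled separately, and that a chain cover must be turned into a disjoint one before it can be compared with $G_k(u)$.
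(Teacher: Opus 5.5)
Your proof is correct and follows essentially the same route as the paper. One inequality applies Dilworth's theorem to $(S,\preceq_u)$, then disjointifies the chains and pads with empty ones; the other uses that an antichain meets each of $k$ disjoint chains at most once. The only difference is cosmetic: you remove earlier chains when disjointifying, whereas the paper removes later ones.
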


	\begin{proof}
		Fix $k\ge1$.
		First, we show that $G_k(u)\ge|S|$ for all $S\subseteq\operatorname{Pos}(u)$ with
		$\ell(S,\preceq_u)\le k$.
		If $S=\varnothing$, then the inequality is obvious since $G_k(u)\ge0=|S|$.
		Assume that $S\ne\varnothing$, and set $t:=\ell(S,\preceq_u)$;
		note that $1\le t\le k$.
		By Theorem~\ref{thm:dilworth},
		there exist $C_1,\dots,C_t\in \Ch(S,\preceq_u)$ such that $S=\bigcup_{i=1}^t C_i$.
		For $1\le i\le k$, we set
		$$
		C_i':=\left\{
		\begin{aligned}
			&C_i\setminus\bigcup_{j>i}C_j\;
			\quad &\text{for}\quad 1\le i\le t,  \\
			&\varnothing\;
			\quad  &\text{for}\quad t<i\le k.
		\end{aligned}
		\right.
		$$
		The sets $C_1',\dots,C_k'$ are pairwise disjoint chains of $\operatorname{Pos}(u)$ such that $\bigsqcup_{i=1}^k{C_i'}=\bigcup_{i=1}^t C_i=S$.
		Thus, by Definition~\ref{def:greene-inv.},
		\[
		G_k(u)\ge\sum_{i=1}^k|C_i'|=|S|.
		\]
		
		Next, we show that there exists $S\subseteq\operatorname{Pos}(u)$ such that $\ell(S,\preceq_u)\le k$ and
		$|S|=G_k(u)$.
		By the definition, there exist pairwise
		disjoint chains $C_1,\dots,C_k$ of $\operatorname{Pos}(u)$ such that $G_k(u)=\sum_{i=1}^k|C_i|$.
		Set $S:=\bigsqcup_{i=1}^kC_i$; 
		note that $|S|=G_k(u)$.
		Let us show that $\ell(S,\preceq_u)\le k$.
		Let $A\in \ACh(S,\preceq_u)$;
		note that $|A|=\sum_{i=1}^k |A\cap C_i|$.
		Let $1\le i\le k$.
		Because two elements in the chain $C_i$ are comparable, 
		and because $A$ is an antichain,
		we see $|A\cap C_i|\le 1$.
		Thus we obtain $|A|\le k$,
		which gives $\ell(S,\preceq_u)\le k$.
	\end{proof}
	
	The following theorem will be used in the proof of our main theorem (Theorem~\ref{thm:main1}).
	
	\begin{theorem}\label{thm:master}
		
		Fix $i\in I=\{1,2,\dots,n-1\}$.
		Let $u=u_1\cdots u_{p^\ast}\cdots u_N$ be a word over $\{1,2,\dots,n\}$,
		with $p^\ast$ an unmatched $i$ in $u$ (Definition~\ref{def:unmatched}).
		Let $u':=u_1\cdots u_{p^\ast}'\cdots u_N$
		be the word over $\{1,2,\dots,n\}$ obtained from $u$ by replacing $u_{p^\ast}=i$ with $u_{p^\ast}':=i+1$.
		Then, for $k\ge1$, we have $G_k(u)\ge G_k(u').$
	\end{theorem}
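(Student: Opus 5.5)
We plan to prove Theorem~\ref{thm:master} through the characterization of Lemma~\ref{lem:tworow}. Fix $k\ge1$. Choose $S'\subseteq\Pos(u')=\Pos(u)$ with $\ell(S',\preceq_{u'})\le k$ and $|S'|=G_k(u')$. We will construct $S\subseteq\Pos(u)$ with $|S|=|S'|$ and $\ell(S,\preceq_u)\le k$. Lemma~\ref{lem:tworow} then gives $G_k(u)\ge|S|=G_k(u')$.

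First we compare the two posets. The words $u$ and $u'$ differ only at $p^\ast$, so $\preceq_u$ and $\preceq_{u'}$ agree on pairs of positions not involving $p^\ast$. If $p^\ast\notin S'$, we take $S:=S'$ and we are done.

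Assume now $p^\ast\in S'$. We look at the strictly decreasing subsequences through $p^\ast$. For a position $q>p^\ast$, being below the entry at $p^\ast$ means $u_q<i$ in $u$ and $u_q<i+1$ in $u'$. So after $p^\ast$ the value $i$ is \emph{lost} as a possible follower, which can only destroy antichains. For $q<p^\ast$ we need $u_q>i$ in $u$ versus $u_q>i+1$ in $u'$. So the only antichains of $(S',\preceq_u)$ that are not antichains of $(S',\preceq_{u'})$ are those containing $p^\ast$ together with some $r<p^\ast$, $r\in S'$, $u_r=i+1$. Moreover $r$ can be taken to be the last element of the antichain before $p^\ast$.

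Now we use the hypothesis that $p^\ast$ is unmatched (Definition~\ref{def:unmatched}). Every letter $i+1$ to the left of $p^\ast$ is bracketed with a letter $i$ strictly between it and $p^\ast$. This gives an injection $\phi$ from the set of positions $r<p^\ast$ with $u_r=i+1$ into the set of positions $m$ with $r<m<p^\ast$ and $u_m=i$. We will take $\phi$ to be the bracketing itself, so that between $r$ and $\phi(r)$ every $i+1$ is matched inside, and the pairs are nested.

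We then modify $S'$. Let $R$ be the set of $r\in S'$ with $r<p^\ast$, $u_r=i+1$. For each $r\in R$ with $\phi(r)\notin S'$, swap $r$ out and $\phi(r)$ in; if $\phi(r)\in S'$ already, use the next free matched $i$ according to the nesting, or fall back to removing $p^\ast$ and inserting a free $i$. Define $S$ to be the result; swaps preserve cardinality, so $|S|=|S'|$.

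The main obstacle is verifying $\ell(S,\preceq_u)\le k$. For this we want a size-preserving map sending each antichain $A$ of $(S,\preceq_u)$ to an antichain of $(S',\preceq_{u'})$:
\begin{itemize}
\item if $A$ contains a swapped-in $\phi(r)$, replace it by $r$;
\item if $A$ contains $p^\ast$ preceded by some $i+1$ at a position $r$, use the nesting of the bracketing to reroute the element before $p^\ast$.
\end{itemize}
The delicate point is that $\phi(r)$ has value $i$, which is smaller than $i+1$, so it may be comparable to different elements than $r$ was. Elements strictly between $r$ and $\phi(r)$ with values $i$ or $i+1$ are exactly where the nesting of the bracket matching has to be exploited.

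If this direct exchange becomes unmanageable, the fallback is induction on the number $a$ of letters $i+1$ preceding $p^\ast$.
\begin{itemize}
\item For $a=0$, the sets of antichains through $p^\ast$ only shrink when passing from $u'$ to $u$, so $S:=S'$ works.
\item For $a>0$, take the last bracketed pair $(r,m)$ before $p^\ast$, with $u_r=i+1$, $u_m=i$. Compare $u$ with the word in which the letters at $r$ and $m$ are treated by a local exchange, reducing to the case with $a-1$ letters $i+1$ before $p^\ast$. This step is again checked through Lemma~\ref{lem:tworow}.
\end{itemize}
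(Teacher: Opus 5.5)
Your setup is right and matches the paper: the reduction via Lemma~\ref{lem:tworow} to building $S$ from $S'$, the trivial case $p^\ast\notin S'$, and your observation that an antichain of $(S',\preceq_u)$ fails to be one of $(S',\preceq_{u'})$ only if it has some $r$ with $u_r=i+1$ immediately followed by $p^\ast$.

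\textbf{The gap.} Your exchange rule is wrong, and the check $\ell(S,\preceq_u)\le k$ that you postpone as ``the main obstacle'' fails for it. Swapping $r$ for its bracket partner $\phi(r)$ creates new incomparabilities. A letter $\ge i+2$ lying between $r$ and $\phi(r)$ is above $r$, but it is incomparable to $\phi(r)$.

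\textbf{Counterexample.} Let $i=2$, $k=2$, $u=3\,4\,2\,1\,2$ and $p^\ast=5$. The $3$ at position $1$ is bracketed with the $2$ at position $3$, so position $5$ is unmatched, and $u'=3\,4\,2\,1\,3$. Take $S'=\{1,2,4,5\}$, the union of the chains $\{1,2\}$ and $\{4,5\}$ of $u'$. Then $|S'|=4=G_2(u')$ and $\ell(S',\preceq_{u'})=2$. Here $R=\{1\}$ and $\phi(1)=3\notin S'$, so your rule produces $S=\{2,3,4,5\}$. Its $u$-letters $4,2,1,2$ contain the strictly decreasing subsequence $4,2,1$, so $\ell(S,\preceq_u)=3>k$. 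Hence the size-preserving map of antichains you hope for cannot exist. The correct choice here is $S=S'$, whose $u$-letters are $3,4,1,2$.

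\textbf{The missing idea: swap selectively.} The paper replaces $q$ by its partner $y(q)$ only under the following conditions:
\begin{itemize}
\item $q\in S'$, $q<p^\ast$ and $u_q=i+1$;
\item $\ell(\operatorname{Left}(q)_{>i+1},\preceq_u)+\ell(\operatorname{Right}(p^\ast)_{<i},\preceq_u)\ge k-1$.
\end{itemize}
Here $\operatorname{Left}(q)_{>i+1}$ is the set of elements of $S'$ before $q$ with letter $>i+1$, and $\operatorname{Right}(p^\ast)_{<i}$ is the set of elements of $S'$ after $p^\ast$ with letter $<i$. In other words, $q$ is swapped only when an antichain of length $k+1$ through $q$ and $p^\ast$ can actually occur.

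For such $q$ you get $y(q)\notin S'$ for free. Otherwise a maximal antichain of $\operatorname{Left}(q)_{>i+1}$, then $q$ and $y(q)$, then a maximal antichain of $\operatorname{Right}(p^\ast)_{<i}$ would contain an antichain of size $k+1$ in $(S',\preceq_{u'})$ avoiding $p^\ast$. So your unspecified fallback for the case $\phi(r)\in S'$ is never needed.

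One then shows two facts:
\begin{itemize}
\item Every antichain of size $k+1$ of $(S',\preceq_u)$ has $p^\ast$ preceded by a swapped position.
\item An antichain of size $k+1$ in $S$ must contain exactly one swapped-in $y(q)$.
\end{itemize}
Combine the part of that antichain after $y(q)$ with $q$ and a maximal antichain of $\operatorname{Left}(q)_{>i+1}$. Combine the part before $y(q)$ with $p^\ast$ and a maximal antichain of $\operatorname{Right}(p^\ast)_{<i}$. This gives two antichains of $(S',\preceq_u)$ to which the first fact applies: one avoids $p^\ast$, and the other has $p^\ast$ preceded by an unswapped position. So both have length at most $k$. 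This forces the sum in the defining inequality for $q$ to be at most $k-2$, a contradiction.

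Your fallback induction on the number of letters $i+1$ before $p^\ast$ does not say what the ``local exchange'' is or why it preserves the bound. So it does not close the gap either.
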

	
	\begin{proof}
		Fix $k\ge 1$. 
		By the definition,
		there are pairwise disjoint chains
		$C_1',\dots,C_k'$ of $\operatorname{Pos}(u')$ such that $G_k(u')=\sum_{t=1}^k|C_t'|$.
		Set $S':=\bigsqcup_{t=1}^k C_t'$;
		note that $|S'|=G_k(u')$.
		Also, we have $\ell(S',\preceq_{u'})\le k$, since $|A\cap C_t'|\le1$ for $A\in\ACh(S',\preceq_{u'})$ and $1\le t\le k$; see the proof of Lemma~\ref{lem:tworow}.
		
		In order to prove $G_k(u)\ge G_k(u')$,
		it suffices to construct a subset $S$ of the poset $\operatorname{Pos}(u)$ (from $S'$)
		such that $|S|=|S'|$ and $\ell(S,\preceq_u)\le k$;
		indeed, if this is done, then
		it follows from Lemma~\ref{lem:tworow} that
		\[
		G_k(u)\ge |S| = |S'|=G_k(u').
		\]
		If $p^\ast\notin S'$, 
		then we set $S:=S'$;
		it is obvious that $|S|=|S'|$.
		Also, we deduce that $\ell(S,\preceq_u)=\ell(S',\preceq_{u'})\le k$, 
		since the subword of $u$ corresponding to $S$ is equal to the subword of $u'$ corresponding to $S'$.
		Assume that $p^\ast\in S'$.
		Note that if $A\subseteq\Pos(u)=\{1,2,\dots,N\}$ is an antichain of $(\Pos(u),\preceq_u)$ with
		$p^\ast\notin A$, then $u_x=u'_x$ for all $x\in A$, and hence $A$ is also an
		antichain of $(\Pos(u'),\preceq_{u'})=(\{1,2,\dots,N\},\preceq_{u'})$; we refer to this fact as $(\dagger)$.
		
		\paragraph{\bf Step 1.} We construct $S$ as follows.
		Set $Q_{S'}:=\{q\in S'\mid q<p^\ast\text{ and }u_q=i+1\}.$
		Since $p^\ast$ is an unmatched $i$,
		we deduce that for each $q\in Q_{S'}$,
		there exists a unique $y(q)\in\operatorname{Pos}(u)$ with $q<y(q)<p^\ast$ such that $u_{y(q)}=i$ and the pair $(u_q,u_{y(q)})=(i+1,i)$ is deleted in the procedure of Definition~\ref{def:unmatched}.
		Thus we obtain an injective map $y:Q_{S'}\to \{1,2,\dots,p^\ast-1\},\ q\mapsto y(q)$.
		For $q\in \operatorname{Pos}(u)=\{1,2,\dots,N\}$, we set
		\begin{align*}
			\operatorname{Left}(q)_{>i+1}&:=\left\{p\in S'\mid p<q\text{ and }u_p>i+1\right\}, \\
			\operatorname{Right}(q)_{<i\;}&:=\left\{p\in S'\mid p>q\text{ and }u_p<i\right\}.
		\end{align*}
		Also, we set
		\begin{align*}
			D&:=\Bigl\{q\in Q_{S'}\mid\ell(\operatorname{Left}(q)_{>i+1},\preceq_u)+\ell(\operatorname{Right}(p^\ast)_{<i},\preceq_u)\ge k-1\Bigr\},\\
			Y&:=\{y(q)\mid q\in D\},\\
			S&:=(S'\setminus D)\cup Y.
		\end{align*}
		\paragraph{\bf Step 2.} We prove that $|S|=|S'|$.
		Notice that $|D|=|Y|$ since $y$ is injective,
		and that $D\subseteq Q_{S'}\subseteq S'$.
		Hence it suffices to verify that
		$Y\cap S'=\varnothing$.
		Suppose, for a contradiction, that
		$Y\cap S'\ne\varnothing$.
		Let $y\in Y\cap S'$, 
		and let $q\in D$ be such that $y=y(q)$.
		Set $a:=\ell(\operatorname{Left}(q)_{>i+1},\preceq_u)$ and $b:=\ell(\operatorname{Right}(p^\ast)_{<i},\preceq_u).$
		Let $\{r_1,\dots,r_a\}\in\ACh(\operatorname{Left}(q)_{>i+1},\preceq_u)$ with $r_1<\dots<r_a$,
		and
		$\{s_1,\dots,s_b\}\in\ACh(\operatorname{Right}(p^\ast)_{<i},\preceq_u)$ with $s_1<\dots<s_b$;
		recall that $u_{r_1}>\cdots>u_{r_a}$ and $u_{s_1}>\cdots>u_{s_b}$.
		Since $q\in D$, we have $a+b\ge k-1$;
		we take $a'\le a$ and $b'\le b$ in such a way that $a'+b'=k-1$.
		Then we see that
		\[
		\left\{\begin{aligned}
			&r_{1}<\cdots<r_{a'}<q<y<s_1<\cdots<s_{b'},\\
			&u_{r_{1}}>\cdots>u_{r_{a'}}>\underset{=i+1}{\underbrace{u_q}}>\underset{=i}{\underbrace{u_y}}>u_{s_1}>\cdots>u_{s_{b'}}.\end{aligned}\right.
		\]
		Hence, $A:=\{r_{1},\dots,r_{a'},\,q,\,y,\,s_1,\dots,s_{b'}\}\in\ACh(\Pos(u),\preceq_u)$;
		observe that $|A|=k+1$ since $a'+b'=k-1$.
		It follows that $p^\ast\notin A$ since $y\in A$, $y<p^\ast$, and $u_y=i=u_{p^\ast}$ (in particular,
		$y$ and $p^\ast$ are comparable in $(\Pos(u),\preceq_u)$), 
		and since
		$A$ is an antichain.
		Therefore, $A\in\ACh(\operatorname{Pos}(u'),\preceq_{u'})$
		by $(\dagger)$.
		Moreover, we see that $A\subseteq S'$,
		since $\operatorname{Left}(q)_{>i+1},\operatorname{Right}(p^\ast)_{<i}$ and $D$ are included in $S'$ by the definitions and since $y\in S'$ by assumption.
		This implies that $\ell(S',\preceq_{u'})\ge|A|=k+1$, which contradicts $\ell(S',\preceq_{u'})\le k$.
		\paragraph{\bf Step 3.} 
		We show the following claim ($\star$):
		if $A=\{x_1, x_2, \dots,x_{m}\}\subseteq S'$ is an antichain of $\Pos(u)$ with 
		$x_1<x_2<\cdots<x_m$ and $m\ge k+1$, 
		then $p^\ast=x_j\in A$ for some $j=2,\dots,m$,
		and $x_{j-1}\in D$.
		Indeed,
		if $p^\ast\notin A$,
		then $A$ is an antichain of $(\Pos(u'),\preceq_{u'})$ included in $S'$ by
		$(\dagger)$,
		which contradicts $\ell(S',\preceq_{u'})\le k$.
		Thus we get $p^\ast\in A$.
		Let $j$ be such that $x_j=p^\ast$.
		Assume that $j=1$.
		Since $u'_{p^\ast}=i+1>i=u_{p^\ast}=u_{x_1}>u_{x_2}$,
		it follows that $A=\{p^\ast=x_1,x_2,\dots,x_{m}\}\in \ACh(S',\preceq_{u'})$,
		which contradicts $\ell(S',\preceq_{u'})\le k$.
		Thus we get $j\ge2$; note that $u_{x_{j-1}}>u_{x_j}=i$, 
		and hence $u_{x_{j-1}}\ge i+1$.
		If $u_{x_{j-1}}\ge i+2$, then $u'_{x_{j-1}}=u_{x_{j-1}}>u'_{p^\ast}=i+1$.
		As above, we see that $A\in\ACh(S',\preceq_{u'})$,
		which contradicts $\ell(S',\preceq_{u'})\le k$.
		Hence we conclude that $u_{x_{j-1}}=i+1$.
		Since $x_{j-1}\in S'$ and $x_{j-1}<p^\ast$, we get $x_{j-1}\in Q_{S'}$.
		Moreover, since $u_{x_1}>\cdots>u_{x_{j-2}}>u_{x_{j-1}}=i+1$,
		it follows that $\{x_1,\dots,x_{j-2}\}\in\ACh(\operatorname{Left}(x_{j-1})_{>i+1},\preceq_u)$,
		and hence $\ell(\operatorname{Left}(x_{j-1})_{>i+1},\preceq_u)\ge j-2$.
		Since $p^\ast=x_j<x_{j+1}$,
		we see that $\{x_{j+1},\dots,x_{m}\}\in\ACh(\operatorname{Right}(p^\ast)_{<i},\preceq_u)$,
		and hence $\ell(\operatorname{Right}(p^\ast)_{<i},\preceq_u)\ge m-j\ge k+1-j$.
		Therefore we obtain
		$$
		\ell(\operatorname{Left}(x_{j-1})_{>i+1},\preceq_u)+
		\ell(\operatorname{Right}(p^\ast)_{<i},\preceq_u)\ge (j-2)+(k+1-j)=k-1,
		$$
		which implies that $x_{j-1}\in D$.
		This proves claim ($\star$).
		\paragraph{\bf Step 4.}     
		We show that $\ell(S,\preceq_u)\le k$.
		Since $p^\ast\notin Q_{S'}$ and $D\subseteq Q_{S'}$, it follows that
		$p^\ast\in S'\setminus D$.
		Suppose, for a contradiction, that $\ell(S,\preceq_u)> k$.
		There exists $A\in\ACh(S,\preceq_u)$ with $|A|=k+1$; write $A$ as $A=\{x_1,x_2,\dots,x_{k+1}\}$ with $x_1<x_2<\cdots<x_{k+1}$.
		Here we recall that $S=(S'\setminus D)\cup Y$.
		We claim that $A\cap Y\ne\varnothing$.
		Suppose, for a contradiction, that $A\cap Y=\varnothing$.
		Then we have $A\in \ACh(S'\setminus D,\preceq_u)$ $\subseteq\ACh(S',\preceq_u)$. It follows from claim ($\star$) in Step 3 that $p^\ast=x_j$ and $x_{j-1}\in D$ for some $j$.
		However, this contradicts $x_{j-1}\in A\subseteq S'\setminus D$.
		Thus, $A\cap Y\ne\varnothing$.
		Since $u_y=i$ for all $y\in Y$,
		and since $A\in\ACh(S,\preceq_u)$,
		we have $|A\cap Y|=1$.
		Let $1\le r\le k+1$ be such that 
		$A\cap Y=\{x_r\}$.
		Set $y:=x_r$, and let $q\in D$ be such that $y=y(q)$.
		Since $y<p^\ast$ and $u_y=i=u_{p^\ast}$, 
		and since $y\in A$ and $A\in\ACh(S,\preceq_u)$,
		we get $p^\ast\notin A$.
		Let $B=\{z_1,\dots,z_a\}\in\ACh(\operatorname{Left}(q)_{>i+1},\preceq_u)$ with $a=\ell(\operatorname{Left}(q)_{>i+1},\preceq_u)\ge0$ and
		$C=\{w_1,\dots,w_c\}\in\ACh(\operatorname{Right}(p^\ast)_{<i},\preceq_u)$ with $c=\ell(\operatorname{Right}(p^\ast)_{<i},\preceq_u)\ge0$;
		note that $a+c\ge k-1$ since $q\in D$.
		The set $B':=B\sqcup\{q\}\sqcup\{x_{r+1},\dots,x_{k+1}\}\in\ACh(S',\preceq_u)$ since
		\[
		\left\{\begin{aligned}
			&z_1<\cdots<z_a<q<y=y(q)=x_r<x_{r+1}<\cdots<x_{k+1},\\
			&u_{z_1}>\cdots>u_{z_a}>i+1=u_q>i=u_y=u_{x_r}>u_{x_{r+1}}>\cdots>u_{x_{k+1}};\end{aligned}\right.
		\]
		$B'$ does not contain $p^\ast$, since $u_x\ne i=u_{p^\ast}$ for all $x\in B'$.
		By claim ($\star$) in Step 3,
		we have $a+1+(k+1-r)\le k$, and hence
		$a\le r-2$; in particular, $r\ge2$.
		Moreover,
		the set $C':=\{x_1,\dots,x_{r-1}\}\sqcup\{p^\ast\}\sqcup C\in\ACh(S',\preceq_u)$ since
		\[
		\left\{\begin{aligned}
			&x_1<\cdots<x_{r-1}(<x_r=y=y(q))<p^\ast<w_1<\cdots<w_c,\\
			&u_{x_1}>\cdots>u_{x_{r-1}}>u_{x_r}=u_y=i=u_{p^\ast}>u_{w_1}>\cdots>u_{w_c}.\end{aligned}\right.
		\]
		By claim ($\star$), together with the fact that $x_{r-1}\in A\setminus\{y\}=A\setminus(A\cap Y)\subseteq S'\setminus D$,
		we deduce that $r+c\le k$.
		Therefore $a+c\le(r-2)+(k-r)=k-2$, which contradicts $q\in D$.
		Thus we obtain $\ell(S,\preceq_u)\le k$, as desired.
		This completes the proof of the theorem.
	\end{proof}
	
	\section{Proof of Theorem~\ref{thm:main1}}\label{sec:proof-thm1}
	
	Recall the notation from \S \ref{sec:plactic}.
	
	\begin{proposition}\label{prop:Tw-ge-fiTw}
		Let $T\in \SSYT(\lambda)$,
		and $u$ a word over $\{1,2,\dots,n\}$.
		Fix $i\in I=\{1,2,\dots,n-1\}$ such that $\tf_i(T)\ne0$.
		Then,
		$
		\shape\bigl(T\cdot P(u)\bigr)\dom \shape\bigl(\tf_i(T)\cdot P(u)\bigr).
		$
	\end{proposition}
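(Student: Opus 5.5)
We reduce the statement to Theorem~\ref{thm:master} by passing to words and Greene invariants. Set $w:=\rw(T)$, so that $T\equiv\mathbf{b}(w)$ by Remark~\ref{rem:T-equiv-boxT}(ii). Because plactic equivalence is induced by a crystal isomorphism of connected components sending $T$ to $\mathbf{b}(w)$, it commutes with $\tf_i$. Hence $\tf_i(T)\equiv\tf_i(\mathbf{b}(w))\ne0$. By Proposition~\ref{prop:signature-rule}, $\tf_i(\mathbf{b}(w))=\mathbf{b}(w')$, where $w'$ is obtained from $w$ by changing the right-most unmatched $i$, at some position $p^\ast$, into $i+1$.

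Next we transfer these words into insertion tableaux. By Remark~\ref{rem:word-component}, $\tf_i(T)\equiv\mathbf{b}(w')\equiv P(w')$. Two tableaux that are plactically equivalent correspond to the same element of the same component $\B(\iota(\mu))$ with $\mu$ the common shape. The map $\Psi_\mu$ is injective, and the shape of $P(w')$ is read off from the component via Greene's theorem. Together these give $\tf_i(T)=P(w')$ as tableaux, or at least the same row reading class. Consequently $\tf_i(T)\cdot P(u)=P(w'\,\rw(P(u)))$, and likewise $T\cdot P(u)=P(w\,\rw(P(u)))$ by Definition~\ref{def:plactic-product}, using $P(w)=T$.

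To avoid worrying about tableau equality, we can argue directly with crystals. Remark~\ref{rem:plactic-tensor} together with Remark~\ref{rem:T-equiv-boxT}(i) gives $\tf_i(T)\cdot P(u)\equiv P(u)\otimes\tf_i(T)\equiv\mathbf{b}(u)\otimes\mathbf{b}(w')=\mathbf{b}(w'u)$. Remark~\ref{rem:word-component} then shows $\shape(\tf_i(T)\cdot P(u))=\shape(P(w'u))$. Similarly, $\shape(T\cdot P(u))=\shape(P(wu))$. Here the shape is determined by the isomorphism type of the connected component, which is preserved by $\equiv$. This shape is a partition of $|\lambda|+|u|$ with at most $n$ parts, and for a fixed size it is determined by the highest weight.

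Now we apply Theorem~\ref{thm:master}. In the word $wu$, the position $p^\ast$ remains an unmatched $i$: appending letters on the right can only create pairs $(i+1,i)$ with an $i+1$ to the left of a later $i$. Letters of $u$ lie to the right of $p^\ast$, so they cannot match $p^\ast$. The cancellation among letters to the left of $p^\ast$ is unaffected, since bracketing is a well-nested matching. Changing position $p^\ast$ of $wu$ to $i+1$ yields $w'u$, so Theorem~\ref{thm:master} gives $G_k(wu)\ge G_k(w'u)$ for all $k$. Greene's Theorem~\ref{thm:greene} converts these into the partial sum inequalities $\sum_{t\le k}\mu_t\ge\sum_{t\le k}\nu_t$ for $\mu=\shape(P(wu))$ and $\nu=\shape(P(w'u))$, which is exactly $\mu\dom\nu$.

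The main obstacle is the bookkeeping in the identification step: checking that the shape of $T\cdot P(u)$ equals $\shape P(wu)$, and the same for $\tf_i(T)$. This uses that $\equiv$ preserves components and commutes with $\tf_i$, and that $\tf_i$ acts on $\mathbf{b}(w)$ within the tensor power as stated. The unmatched-ness of $p^\ast$ in the concatenation $wu$ is the one combinatorial point to check carefully.
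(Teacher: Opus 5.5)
Your argument is correct and is essentially the paper's proof: the position $p^\ast$ stays an unmatched $i$ after appending letters on the right, so Theorem~\ref{thm:master} together with Greene's theorem gives the dominance. The only difference is bookkeeping. The paper appends $v=\rw(P(u))$ instead of $u$, and uses that $\rw(\tf_i(T))=w'$ holds literally, because the crystal structure on $\SSYT(\lambda)$ is transported via $\Psi_\lambda$; hence $T\cdot P(u)=P(wv)$ and $\tf_i(T)\cdot P(u)=P(w'v)$ follow directly from Definition~\ref{def:plactic-product}, so your second paragraph and the shape identification through $\equiv$ are unnecessary, though your crystal-theoretic identification is also valid.
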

	
	\begin{proof}
		Write $\rw(T)$ as $\rw(T)=x_1x_2\cdots x_N$.
		By Proposition~\ref{prop:signature-rule},
		there exists $1\le p\le N$ such that $x_p=i$ is an unmatched $i$ in $\rw(T)$,
		and $\rw(\tf_i(T))$ is obtained from $\rw(T)$ by replacing $x_p=i$ with $x_p'=i+1$.
		Since whether a letter $i$ is unmatched depends only on the letters to its left
		(Definition~\ref{def:unmatched}),
		we deduce that this $x_p=i$ is an unmatched $i$ also in $\rw(T)v$, where $v:=\rw(P(u))$.
		Hence it follows from Theorem~\ref{thm:master} that $G_k\bigl(\rw(T)v\bigr)\ge G_k\bigl(\rw(\tf_i(T))v\bigr)$ for all $k\ge1$;
		notice that $|\shape\bigl(P(\rw(T)v)\bigr)|=|\shape\bigl(P(\rw(\tf_i(T))v)\bigr)|$.
		Therefore, by Definition~\ref{def:plactic-product} and Theorem~\ref{thm:greene}, together with the definition of $\dom$, we obtain
		$$\shape\bigl(T\cdot P(u)\bigr)=
		\shape\bigl(P(\rw(T)v)\bigr)\dom 
		\shape\bigl(P(\rw(\tf_i(T))v)\bigr)=\shape\bigl(\tf_i(T)\cdot P(u)\bigr),$$
		as desired.
	\end{proof}
	
	\begin{proposition}\label{prop:w0Tgew0fiT}
		Let $T\in \SSYT(\lambda)$ and $i\in I$ be such that $\tf_i(T)\ne0$.
		Then,
		\[
		\wt\bigl(\chke_{w_\circ}(T)\bigr)\ge\wt\bigl(\chke_{w_\circ}(\tf_i(T))\bigr).
		\]
	\end{proposition}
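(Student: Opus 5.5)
The plan is to translate $\chke_{w_\circ}$ into a statement about a tensor product with $b_\rho$, and then realize that tensor product by a plactic product so that Proposition~\ref{prop:Tw-ge-fiTw} applies. By Proposition~\ref{prop:eimax-to-chkw}, iterated along a reduced expression of $w_\circ$, together with Proposition~\ref{prop:eimax-indep}, we have
\[
\te_{w_\circ}^{\max}(b_\rho\otimes T)=b_\rho\otimes\chke_{w_\circ}(T).
\]
In particular $b_\rho\otimes\chke_{w_\circ}(T)$ is killed by every $\te_i$, so it is a highest element of $\B(\rho)\otimes\Bl$. Its weight $\rho+\wt(\chke_{w_\circ}(T))$ is therefore the highest weight of the connected component $C(b_\rho\otimes T)$. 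The same holds for $\tf_i(T)$. Hence the proposition is equivalent to the inequality
\[
\text{(highest weight of }C(b_\rho\otimes T))\ \ge\ \text{(highest weight of }C(b_\rho\otimes\tf_i(T))).
\]

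Next I would compute these highest weights in type $A$ via plactic products. Realize $\B(\rho)$ as $\SSYT(\delta)$ with $\delta=(n-1,n-2,\dots,1,0)$, so $\iota(\delta)=\rho$, and let $R$ be its highest element; choose a word $u$ with $P(u)=R$, e.g.\ $u=\rw(R)$. By Remark~\ref{rem:plactic-tensor}, $T\cdot R\equiv R\otimes T=b_\rho\otimes T$. Thus $C(b_\rho\otimes T)\cong\B(\iota(\shape(T\cdot P(u))))$, and its highest weight is $\iota(\shape(T\cdot P(u)))$. The same argument gives the highest weight of $C(b_\rho\otimes\tf_i(T))$ as $\iota(\shape(\tf_i(T)\cdot P(u)))$.

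Proposition~\ref{prop:Tw-ge-fiTw} gives $\shape(T\cdot P(u))\dom\shape(\tf_i(T)\cdot P(u))$. Both shapes have the same size $|\lambda|+|\delta|$, so Remark~\ref{rem:dominance-partition} converts this into $\iota(\cdot)\ge\iota(\cdot)$ in $P$. Subtracting $\rho$ then yields $\wt(\chke_{w_\circ}(T))\ge\wt(\chke_{w_\circ}(\tf_i(T)))$.

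The main care point is the identification of the highest weight of a component containing a given element. An isomorphism of connected crystals $C(T\cdot R)\cong C(b_\rho\otimes T)$ preserves weights. Each such component is isomorphic to some $\B(\mu)$, which has a unique highest element; since $b_\rho\otimes\chke_{w_\circ}(T)$ is highest in $C(b_\rho\otimes T)$, its weight is $\mu$. The weight $\mu$ is read off from the shape via $\iota$, so the dominance order on shapes of equal size transfers to the order on weights.
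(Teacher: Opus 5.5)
Your proposal is correct and follows essentially the same route as the paper's proof: you add $\rho$ via Proposition~\ref{prop:eimax-to-chkw}, realize $b_\rho\otimes T$ as the plactic product $T\cdot T_\rho$, read off the highest weight as $\iota$ of the shape, and conclude with Proposition~\ref{prop:Tw-ge-fiTw} and Remark~\ref{rem:dominance-partition}. Your choice $u=\rw(R)$, so that $P(u)=R$, makes explicit a step the paper leaves implicit when it applies Proposition~\ref{prop:Tw-ge-fiTw} directly with $T_\rho$.
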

	\begin{proof}
		Set $\varrho:=(n-1,n-2,\dots,2,1,0)\in\Par_{\le n}$; note that $\varrho_n=0$ and $\iota(\varrho)=\rho=\sum_{i\in I}\varpi_i$.
		Let $T_\rho\in\SSYT(\varrho)$ be the highest tableau (whose entries in the $j$-th row are all equal to $j$),
		which corresponds to the highest element $b_\rho\in\B(\rho)$.
		It suffices to show
		\begin{align}\label{delete-chkw-operator}
			\wt\bigl(T_\rho\otimes\chke_{w_\circ}(T)\bigr)\ge\wt\bigl(T_\rho\otimes\chke_{w_\circ}(\tf_i(T))\bigr),
		\end{align}
		or equivalently,
		$\wt\bigl(\te_{w_\circ}^{\max}(T_\rho\otimes T)\bigr)\ge\wt\bigl(\te_{w_\circ}^{\max}(T_\rho\otimes\tf_i(T))\bigr)$;
		see Proposition~\ref{prop:eimax-to-chkw}.
		By Remark~\ref{rem:plactic-tensor},
		we have $T_\rho\otimes T\equiv T\cdot T_\rho$
		and $T_\rho\otimes\tf_i(T)\equiv\tf_i(T)\cdot T_\rho$.
		Since the Kashiwara operators preserve the shape of a semistandard Young tableau, and since
		$\te_{w_\circ}^{\max}(T\cdot T_\rho)$ is the highest element of the connected
		component containing it, we have
		$
		\wt\bigl(\te_{w_\circ}^{\max}(T\cdot T_\rho)\bigr)=\iota\bigl(\shape(T\cdot T_\rho)\bigr).
		$
		Similarly,
		we have $\wt\bigl(\te_{w_\circ}^{\max}(T_\rho\otimes\tf_i(T))\bigr)=\iota\bigl(\shape(\tf_i(T)\cdot T_\rho)\bigr)$.
		Note that $|\shape\bigl(T\cdot T_\rho\bigr)|=|\shape\bigl(\tf_i(T)\cdot T_\rho\bigr)|=|\lambda|+|\varrho|$.
		Thus we see by Remark~\ref{rem:dominance-partition} that inequality \eqref{delete-chkw-operator} is equivalent to
		$
		\shape\bigl(T\cdot T_\rho\bigr)\dom\shape\bigl(\tf_i(T)\cdot T_\rho\bigr),
		$
		which follows from Proposition \ref{prop:Tw-ge-fiTw}.
	\end{proof}
	
	\begin{proof}[Proof of Theorem~\ref{thm:main1}]
		
		We now prove Theorem~\ref{thm:main1}.
		We deduce from Propositions~\ref{prop:eimax-to-chkw} and  \ref{prop:chkwmax-check-stable} that
		\[
		\te_i^{\max}(b_\rho\otimes{b_\star})=b_\rho\otimes\chke_{i}(b_\star)=b_\rho\otimes{b_\star}
		\qquad\text{for all }i\in I,
		\]
		which implies that $b_\rho\otimes{b_\star}$ is a highest element of $\B(\rho)\otimes\B(\lambda)$.
		We claim that 
		\begin{align}\label{eq:wt-min}
			\wt\bigl(\te_{w_\circ}^{\max}(b_\rho\otimes b)\bigr)\ge\wt(b_\rho\otimes{b_\star})
		\end{align}
		for all $b\in\Bl$; 
		recall that $\te_{w_\circ}^{\max}(b_\rho\otimes b)=b_\rho\otimes\chke_{w_\circ}(b)$ is the highest element of the connected component $C(b_\rho\otimes b)$ of $\B(\rho)\otimes\Bl$.
		Indeed,
		let $b\in\Bl$.
		Then, $b_{w_\circ\lambda}=\tf_{i_1}\cdots \tf_{i_k}(b)$ for some $i_1,\dots,i_k\in I$ with $k\ge 0$.
		We see by Proposition~\ref{prop:w0Tgew0fiT}, together with  Proposition~\ref{prop:eimax-to-chkw}, that
		\begin{align*}
			&\wt\bigl(\te_{w_\circ}^{\max}(b_\rho\otimes b)\bigr)
			=\wt\bigl(b_\rho\otimes\chke_{w_\circ}(b)\bigr)
			=\rho+\wt\bigl(\chke_{w_\circ}(b)\bigr)\\
			&\ge \rho+\wt\bigl(\chke_{w_\circ}(\tf_{i_k}b)\bigr)\ge\cdots\ge \rho+\wt\bigl(\chke_{w_\circ}(\tf_{i_{1}}\cdots\tf_{i_k}b)\bigr)\\
			&=\rho+\wt\bigl(\chke_{w_\circ}(b_{w_\circ\lambda})\bigr)=\wt(b_\rho\otimes b_\star).
		\end{align*}
		
		Here we deduce from 
		\cite{prv1967} (see also \cite[\S 3]{khare2012})
		that there exists a unique minimal element $\nu$ (with respect to the dominance order) in the set of all $\mu\in P^+$ such that the multiplicity of $V(\mu)$
		in $V(\rho)\otimes V(\lambda)$ is greater than 0;
		moreover, the multiplicity of $V(\nu)$ in $V(\rho)\otimes V(\lambda)$ is equal to 1.
		By this fact and inequality \eqref{eq:wt-min},
		we see that $\wt(b_\rho\otimes b_{\star})=\nu.$
		
		Now, let us show that $\F(\mathcal{C}_{b_\star})\subseteq\mathcal{C}_{b_\star}$.
		Let $b\in \mathcal{C}_{b_\star}$ and $i\in I$ be such that $\tf_i(b)\ne0$.
		We deduce again from Proposition~\ref{prop:w0Tgew0fiT} that
		\[
		\wt(b_\star)=\wt\bigl(\chke_{w_\circ}(b)\bigr)\ge\wt\bigl(\chke_{w_\circ}(\tf_i(b))\bigr),
		\]
		which implies that $\wt(b_\rho\otimes b_\star)\ge\wt\bigl(b_\rho\otimes\chke_{w_\circ}(\tf_i(b))\bigr)$.
		As seen above, we have $\wt\bigl(b_\rho\otimes\chke_{w_\circ}(\tf_i(b))\bigr)\ge\wt(b_\rho\otimes b_\star)$.
		Thus we obtain $\wt\bigl(b_\rho\otimes\chke_{w_\circ}(\tf_i(b))\bigr)=\wt(b_\rho\otimes b_\star)=\nu.$
		Since the multiplicity of $V(\nu)$ in $V(\rho)\otimes V(\lambda)$ is equal to $1$,
		and since both $b_\rho\otimes b_{\star}$ and $b_\rho\otimes\chke_{w_\circ}(\tf_i(b))=\te_{w_\circ}^{\max}(b_\rho\otimes \tf_i(b))$ are highest elements of weight $\nu$,
		we get $b_\rho\otimes\chke_{w_\circ}(\tf_i(b))=b_\rho\otimes b_\star,$
		and hence $b_\star=\chke_{w_\circ}(\tf_i(b))$.
		Thus we obtain $\tf_i(b)\in \mathcal{C}_{b_\star}$, which proves $\F(\mathcal{C}_{\bstar})\subseteq\mathcal{C}_{\bstar}$.
	\end{proof}

	\begin{remark}
		In the proof above, the type $A$ assumption is used only through Proposition~\ref{prop:w0Tgew0fiT};
		every other step is valid for $\g$ of arbitrary finite type.
		Hence Conjecture~\ref{conj:main1-all-types} holds for any finite type $\g$ for which an analogue of
		Proposition~\ref{prop:w0Tgew0fiT} is available.
		Moreover, by the Parthasarathy--Ranga Rao--Varadarajan theorem \cite{prv1967}
		(see also \cite[\S3]{khare2012}), the weight $\nu$ above is the highest weight $\overline{\rho+w_\circ\lambda}$
		of the PRV component of $V(\rho)\otimes V(\lambda)$, where $\overline{\mu}$ denotes the unique dominant weight
		in the $W$-orbit of $\mu\in P$; thus $\wt(\bstar)=\nu-\rho=\overline{\rho+w_\circ\lambda}-\rho$.
	\end{remark}
	%%%%%%%%%%%%%%%%%%%%%%%%%%%%%%%%%%%%%%%%%%%%%%%%%%%%%%%%%%%%%%%%
	\appendix
	\section{Further properties of the subsets \texorpdfstring{$\mathcal{C}_b$}{C\_b}}\label{sec:applications}
	%%%%%%%%%%%%%%%%%%%%%%%%%%%%%%%%%%%%%%%%%%%%%%%%%%%%%%%%%%%%%%%%
	
	We denote by $\le$ the Bruhat order on $W$.
	Fix $\lambda\in P^+.$
	Set $W_\lambda:=\{w\in W\mid w\lambda=\lambda\}$.
	For $w\in W$, denote by $\lfloor w\rfloor=\lfloor w\rfloor^\lambda$ the minimal-length coset representative of $wW_\lambda$,
	and set $W^\lambda:=\{\lfloor w\rfloor=\lfloor w\rfloor^\lambda\mid w\in W\}\subseteq W$.
	Recall that $\B_w(\lambda)$, $w\in W$,
	denotes the Demazure crystal associated with $w\in W$;
	note that $\B_w(\lambda)=\B_v(\lambda)$ if $wW_\lambda=vW_\lambda$.
	
	\begin{definition}[{see \cite{mason2009}
    \cite{assafgonzalez2025}}]\label{def:demazureatom}
		Let $\lambda\in P^+$.
		The Demazure atom corresponding to $w\in W^\lambda$ is defined to be $\mathcal{A}_w(\lambda):=\B_w(\lambda)\setminus\bigl(\bigcup_{v\in W^\lambda,v<w} \B_{v}(\lambda)\bigr)$.
	\end{definition}
	\begin{remark}\label{rem:atomblock}
		We have $\Bl=\bigsqcup_{v\in W^\lambda}\mathcal{A}_v(\lambda)$, and $\B_w(\lambda)=\bigsqcup_{v\in W^\lambda,\,v\le w}\mathcal{A}_v(\lambda)$ for $w\in W^\lambda$.
	\end{remark}
	
	\begin{proposition}[cf. Proposition~\ref{prop:unique-checked-rep}]\label{prop:decomp-into-classes}
		Let $X$ be the Demazure atom $\mathcal{A}_w(\lambda)$ for some $w\in W^\lambda$
		or an extremal subset $E\in\mathbb{E}_\lambda$. Then,
		$$X=\bigsqcup_{b\in X\cap\Tfix}\mathcal{C}_b.$$
	\end{proposition}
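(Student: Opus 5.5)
Since $\Bl=\bigsqcup_{b\in\Tfix}\mathcal{C}_b$ by Proposition~\ref{prop:unique-checked-rep}, and each class $\mathcal{C}_b$ meets $\Tfix$ exactly in $\chke_{\wo}(b)$, it suffices to show that $X$ is a union of classes. That is, we must show that for every $c\in X$ we have $\chke_{\wo}(c)\in X$, and that every $c'$ with $\chke_{\wo}(c')=\chke_{\wo}(c)$ also lies in $X$. Given this, $X=\bigcup_{b\in X\cap\Tfix}\mathcal{C}_b$, and the union is disjoint by Proposition~\ref{prop:unique-checked-rep}. The basic tool is that $c$ and $\chke_i(c)$ generate the same extremal subset (Proposition~\ref{prop:chke-invariant}). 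Iterating along a reduced word gives $\langle c\rangle_{\Ext}=\langle\chke_{\wo}(c)\rangle_{\Ext}$. So any two elements of the same class $\mathcal{C}_b$ generate the same extremal subset $\langle b\rangle_{\Ext}$.

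\emph{Extremal case.} Let $E\in\mathbb{E}_\lambda$ and $c\in E$. Then $\langle c\rangle_{\Ext}\subseteq E$. For any $c'\in\mathcal{C}_c$ we have $c'\in\langle c'\rangle_{\Ext}=\langle c\rangle_{\Ext}\subseteq E$. Hence $\mathcal{C}_c\subseteq E$, and the decomposition follows.

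\emph{Atom case.} First, a Demazure crystal $\B_v(\lambda)$ is extremal (\cite[Proposition 3.3.5]{kashiwara1993}), so by the previous paragraph it is a union of classes. Second, unions of classes are closed under set differences and unions. Since $\mathcal{A}_w(\lambda)=\B_w(\lambda)\setminus\bigcup_{v<w}\B_v(\lambda)$ is built from Demazure crystals by such operations, it is a union of classes. Equivalently, $c\in\mathcal{A}_w(\lambda)$ and $c'\in\mathcal{C}_c$ lie in exactly the same Demazure crystals, because $\langle c\rangle_{\Ext}=\langle c'\rangle_{\Ext}$ and membership in an extremal subset $Z$ is equivalent to $\langle\cdot\rangle_{\Ext}\subseteq Z$. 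Applying the decomposition with $X\cap\Tfix$ as the set of representatives then finishes the proof.

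The main point to verify carefully is that membership is determined by the generated extremal subset, i.e.\ that $c\in Z$ if and only if $\langle c\rangle_{\Ext}\subseteq Z$ for every $Z\in\mathbb{E}_\lambda$. This is immediate from Definition~\ref{def:ext-generated}. Beyond that, the argument is formal.
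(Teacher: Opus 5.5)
Your proof is correct, but it takes a different route from the paper's.

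\textbf{Your route (global).} Iterating Proposition~\ref{prop:chke-invariant} along a reduced word shows that $c\mapsto\langle c\rangle_{\Ext}$ is constant on each class $\mathcal{C}_b$. Hence every extremal subset is automatically a union of classes. The atom case follows because $\mathcal{A}_w(\lambda)$ is a Boolean combination of the extremal sets $\B_v(\lambda)$, and the classes partition $\Bl$.

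\textbf{The paper's route (local).} It first shows that $X$, in either case, satisfies condition \eqref{eq:string-minus-top-in-X}: whenever $b\in X$ is not the top of its $i$-string, the whole string minus its top lies in $X$. For atoms this is checked directly from the string property of $\B_w(\lambda)$ and of each $\B_v(\lambda)$ with $v<w$. From this it deduces $\chke_i(X)\subseteq X$. It then recovers $c$ from $\chke_{\wo}(c)$ by reverse induction along a reduced word, which is in effect a localized rerun of the argument behind Proposition~\ref{prop:chke-invariant}.

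\textbf{Comparison.} Your version is shorter and reuses an existing result. It also makes transparent that the partition into classes refines the partition of $\Bl$ by $\langle c\rangle_{\Ext}$. So your conclusion holds for every set in the Boolean algebra generated by $\mathbb{E}_\lambda$, for example complements and differences of extremal subsets. The paper's version instead isolates a purely local, checkable criterion. Together with Proposition~\ref{prop:weak-string-property}, condition \eqref{eq:string-minus-top-in-X} is in fact equivalent to being a union of classes, so the paper obtains a characterization rather than only a sufficient condition.
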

	\begin{proof}
		The proof is divided into four steps.
		\paragraph{\bf Step 1.}
		We show that
		\begin{align}\label{eq:string-minus-top-in-X}
			S^i(b)\setminus\{\te_i^{\max}(b)\}\subseteq X
			\quad\text{for all } b\in X \text{ and } i\in I \text{ such that } \te_i(b)\ne0.
		\end{align}
		Let $b\in X$ and $i\in I$ be such that $\te_i(b)\ne0$; note that $b\ne\te_i^{\max}(b)$.
		First, assume that $X=E\in\mathbb{E}_\lambda$.
		Since $b\in S^i(b)\cap E$ and $b\ne\te_i^{\max}(b)$,
		we have $S^i(b)\cap E=S^i(b)$ by \eqref{i-stringproperty}, which implies \eqref{eq:string-minus-top-in-X}.
		Next, assume that $X=\mathcal{A}_w(\lambda)$.
		Since $b\in\B_w(\lambda)$, $b\ne\te_i^{\max}(b)$, and $\B_w(\lambda)$ is an extremal subset,
		we have $S^i(b)\subseteq\B_w(\lambda)$ by \eqref{i-stringproperty}.
		Also, for each $v\in W^\lambda$ with $v<w$, we have $b\notin\B_v(\lambda)$,
		and hence $S^i(b)\cap\B_v(\lambda)\in\{\varnothing,\{\te_i^{\max}(b)\}\}$ by \eqref{i-stringproperty}.
		Therefore we obtain
		$S^i(b)\setminus\{\te_i^{\max}(b)\}\subseteq\B_w(\lambda)\setminus\bigcup_{v\in W^\lambda,\,v<w}\B_v(\lambda)=\mathcal{A}_w(\lambda)$.
		
		\paragraph{\bf Step 2.}
		We show that $\chke_i(X)\subseteq X$ for all $i\in I$;
		in particular, $\chke_{\wo}(b)\in X$ for all $b\in X$.
		Let $b\in X$ and $i\in I$.
		If $\te_i(b)=0$, then $\chke_i(b)=b\in X$ by Definition~\ref{def:chke}.
		If $\te_i(b)\ne0$, then $\chke_i(b)=\tf_i\te_i^{\max}(b)$ is an element of $S^i(b)$ distinct from $\te_i^{\max}(b)$,
		and hence $\chke_i(b)\in X$ by \eqref{eq:string-minus-top-in-X}.
		
		\paragraph{\bf Step 3.}
		We show that $\mathcal{C}_b\subseteq X$ for all $b\in X$.
		Let $b\in X$ and $c\in\mathcal{C}_b$.
		Take a reduced expression $\wo=s_{j_1}\cdots s_{j_N}$ of $\wo$.
		Set $c_{N+1}:=c$ and $c_k:=\chke_{j_k}(c_{k+1})$ for $k=N,N-1,\dots,1$;
		note that $c_1=\chke_{\wo}(c)=\chke_{\wo}(b)$.
		We show that $c_k\in X$ for $k=1,2,\dots,N+1$ by induction on $k$.
		If $k=1$, then $c_1=\chke_{\wo}(b)\in X$ by Step 2.
		Let $1\le k\le N$, and assume that $c_k\in X$.
		If $c_{k+1}=c_k$, then $c_{k+1}\in X$.
		Assume that $c_{k+1}\ne c_k=\chke_{j_k}(c_{k+1})$.
		Then $\te_{j_k}(c_{k+1})\ne0$ and $c_k=\tf_{j_k}\te_{j_k}^{\max}(c_{k+1})$ by Definition~\ref{def:chke};
		hence $S^{j_k}(c_k)=S^{j_k}(c_{k+1})$ and $\te_{j_k}(c_k)\ne0$.
		Since $c_{k+1}\ne\te_{j_k}^{\max}(c_{k+1})$,
		we deduce from \eqref{eq:string-minus-top-in-X}, applied to $c_k\in X$, that
		$c_{k+1}\in S^{j_k}(c_k)\setminus\{\te_{j_k}^{\max}(c_k)\}\subseteq X$.
		Thus we obtain $c=c_{N+1}\in X$.
		
		\paragraph{\bf Step 4.}
		Let $c\in X$,
		and set $d:=\chke_{\wo}(c)$;
		note that $d\in X\cap\Tfix$ by Step 2 and Proposition~\ref{prop:chkwmax-check-stable},
		and that $c\in\mathcal{C}_d$.
		Hence, $X\subseteq\bigsqcup_{b\in X\cap\Tfix}\mathcal{C}_b$;
		the reverse inclusion follows from Step 3.
		Thus we have proved the proposition.
	\end{proof}
	
	\begin{proposition}\label{prop:union-of-classes}
		Let $R\subseteq\Tfix$,
		and set $X:=\bigsqcup_{b\in R}\mathcal{C}_{b}$.
		Then, $X$ is an extremal subset if and only if $X$ is $\te_i$-stable for all $i\in I$.
	\end{proposition}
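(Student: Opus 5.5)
The forward direction is immediate from Proposition~\ref{prop:ext-e-stable}: every extremal subset is $\te_i$-stable for all $i\in I$. So the work is in the converse. Assume $X=\bigsqcup_{b\in R}\mathcal{C}_b$ is $\te_i$-stable for all $i\in I$. Fix $b\in\Bl$ and $i\in I$; we must show
\[
S^i(b)\cap X\in\{\varnothing,\ S^i(b),\ \{\te_i^{\max}(b)\}\}.
\]

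The plan is to use Proposition~\ref{prop:weak-string-property}. For each $a\in R$ it says that $\mathcal{C}_a\cap S^i(b)$ is one of $\varnothing$, $S^i(b)$, $S^i(b)\setminus\{\te_i^{\max}(b)\}$, or $\{\te_i^{\max}(b)\}$. Taking the union over $a\in R$, the set $S^i(b)\cap X$ is a union of such pieces. Hence it is itself one of these four sets, since the union of the last two is $S^i(b)$.

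It then remains to exclude the case $S^i(b)\cap X=S^i(b)\setminus\{\te_i^{\max}(b)\}$ with this set nonempty. In that case $|S^i(b)|>1$, so $\tf_i\te_i^{\max}(b)$ lies in $S^i(b)\setminus\{\te_i^{\max}(b)\}\subseteq X$. Applying $\te_i$ gives $\te_i^{\max}(b)\in X$ by $\te_i$-stability, which is a contradiction. So $S^i(b)\cap X$ always lies in the allowed set, and $X$ is extremal.

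No step looks like a real obstacle. The only point needing care is that the four-way dichotomy is preserved under unions: every combination of the listed sets is again in the list, including the union of $S^i(b)\setminus\{\te_i^{\max}(b)\}$ and $\{\te_i^{\max}(b)\}$, which equals $S^i(b)$. This is a one-line check.
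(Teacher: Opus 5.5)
Your proof is correct and follows essentially the same route as the paper: both combine Proposition~\ref{prop:weak-string-property} with $\te_i$-stability. The only difference is ordering. The paper first uses stability to put $\te_i^{\max}(b)$ in $X$ and then applies the weak string property to a single class $\mathcal{C}_d$ containing a non-top element. You instead take the union over all classes first and then use stability to exclude the case $S^i(b)\setminus\{\te_i^{\max}(b)\}$.
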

	\begin{proof}
		The implication ($\Rightarrow$) follows from Proposition~\ref{prop:ext-e-stable}.
		We prove the converse implication.
		Assume that $X$ is $\te_i$-stable for all $i\in I$.
		Let $b\in\Bl$ and $i\in I$ be such that $S^i(b)\cap X\ne\varnothing$.
		Since $X$ is $\te_i$-stable, we have $\te_i^{\max}(b)\in X$.
		Assume further that $S^i(b)\cap X\ne\{\te_i^{\max}(b)\}$,
		and take $c\in S^i(b)\cap X$ with $c\ne\te_i^{\max}(b)$.
		Let $d\in R$ be such that $c\in\mathcal{C}_d$.
		Since $c\in\mathcal{C}_d\cap S^i(b)$ and $c\ne\te_i^{\max}(b)$,
		we deduce from Proposition~\ref{prop:weak-string-property} that
		$S^i(b)\setminus\{\te_i^{\max}(b)\}\subseteq\mathcal{C}_d\cap S^i(b)\subseteq X$,
		and hence that $S^i(b)\cap X=S^i(b)$.
		Therefore,
		$S^i(b)\cap X\in\{\varnothing,\;S^i(b),\;\{\te_i^{\max}(b)\}\}$
		for all $b\in\Bl$ and $i\in I$,
		and hence $X$ is an extremal subset.
	\end{proof}
	
	Let $b\in\Bl$;
	recall from Proposition~\ref{prop:eimax-to-chkw} that
	$\te_{\wo}^{\max}(b_\rho\otimes b)=b_\rho\otimes\chke_{\wo}(b)$
	is a highest element of $\B(\rho)\otimes\Bl$.
	Hence there exists a unique isomorphism
	$\Psi_b:C(b_\rho\otimes b)\xrightarrow{\ \sim\ }\B(\mu)$ of crystals
	which maps $b_\rho\otimes\chke_{\wo}(b)$ to $b_\mu$,
	where $\mu:=\wt\bigl(b_\rho\otimes\chke_{\wo}(b)\bigr)\in P^+$.

	\begin{proposition}[cf. Proposition~\ref{prop:unique-checked-rep}]\label{prop:brho-C_b}
		Keep the notation and setting above.
		We have $\Psi_b\bigl(b_\rho\otimes\mathcal{C}_b\bigr)=\B_w(\mu)$ for some $w\in W$.
	\end{proposition}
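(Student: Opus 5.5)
The plan is to identify $b_\rho\otimes\mathcal{C}_b$ intrinsically inside $\B(\rho)\otimes\Bl$ and then invoke the known decomposition of a tensor product of a highest element with a Demazure crystal. The second step is a citation, not something proved here.

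\textbf{Step 1: $b_\rho\otimes\mathcal{C}_b=C(b_\rho\otimes b)\cap(b_\rho\otimes\Bl)$.} For any $c\in\Bl$, Proposition~\ref{prop:eimax-to-chkw} together with Definition~\ref{def:chkw} gives $\te_{\wo}^{\max}(b_\rho\otimes c)=b_\rho\otimes\chke_{\wo}(c)$. This element is the highest element of $C(b_\rho\otimes c)$, because $\B(\rho)\otimes\Bl$ is a direct sum of crystals $\B(\mu)$, each of which has a unique highest element.

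So $b_\rho\otimes c\in C(b_\rho\otimes b)$ holds if and only if the two highest elements coincide, that is, $b_\rho\otimes\chke_{\wo}(c)=b_\rho\otimes\chke_{\wo}(b)$. This is equivalent to $\chke_{\wo}(c)=\chke_{\wo}(b)$, i.e. to $c\in\mathcal{C}_b$. This proves the identity, and therefore
\[
\Psi_b\bigl(b_\rho\otimes\mathcal{C}_b\bigr)=\Psi_b\bigl(C(b_\rho\otimes b)\cap(b_\rho\otimes\Bl)\bigr).
\]

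\textbf{Step 2: the known Demazure decomposition.} Note that $\Bl=\B_{\wo}(\lambda)$. I would then invoke Joseph's theorem \cite{joseph2003}, also due to Lakshmibai--Littelmann--Magyar. It states that for dominant $\rho$ and any $w'\in W$, the set $b_\rho\otimes\B_{w'}(\lambda)$ meets each connected component $C$ of $\B(\rho)\otimes\Bl$ either trivially or in a subset that the isomorphism $C\cong\B(\mu)$ carries onto a Demazure crystal $\B_w(\mu)$.

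Apply this with $w'=\wo$ and $C=C(b_\rho\otimes b)$. The intersection is nonempty, since it contains $b_\rho\otimes b$. Hence Step 1 yields $\Psi_b(b_\rho\otimes\mathcal{C}_b)=\B_w(\mu)$ for some $w\in W$.

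\textbf{Main obstacle.} The real difficulty is entirely in Step 2, which I am taking from the literature rather than proving. A self-contained argument would have to show that $\Psi_b(b_\rho\otimes\mathcal{C}_b)$ satisfies Kashiwara's recursive characterization of Demazure crystals, $\B_{s_iv}(\mu)=\bigcup_k\tf_i^k\B_v(\mu)$. Being extremal is not enough: one can check from Proposition~\ref{prop:weak-string-property} and the tensor product rule that $b_\rho\otimes\mathcal{C}_b$ satisfies the string property, but extremal subsets need not be Demazure crystals.

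If a self-contained proof were required, I would argue by induction on the length of the element $w$. I would use the string property, as in Proposition~\ref{prop:weak-string-property}, to show that applying $\tf_i$-strings to a Demazure piece stays inside $b_\rho\otimes\Bl$ exactly as the Demazure recursion predicts. This is essentially a re-derivation of Joseph's argument.
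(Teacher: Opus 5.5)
Your proposal is correct and is essentially the paper's proof. You identify $b_\rho\otimes\mathcal{C}_b$ with $(b_\rho\otimes\Bl)\cap C(b_\rho\otimes b)$ by comparing the highest elements $\te_{\wo}^{\max}(b_\rho\otimes c)=b_\rho\otimes\chke_{\wo}(c)$ (Proposition~\ref{prop:eimax-to-chkw}), and then apply Joseph's decomposition theorem to $b_\rho\otimes\B_{\wo}(\lambda)$, exactly as the paper does. (The fact that $\te_{\wo}^{\max}(b_\rho\otimes c)$ is itself a highest element should be justified by Proposition~\ref{prop:eimax-indep}; uniqueness of the highest element in each $\B(\mu)$ alone does not give it.)
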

	\begin{proof}
		We deduce from \cite[Theorem 2.11]{joseph2003} that the image of
		$\bigl(b_\rho\otimes\Bl\bigr)\cap C(b_\rho\otimes b)$ under the isomorphism $\Psi_b$
		is identical to a Demazure crystal $\B_w(\mu)$ for some $w\in W$.
		Hence it suffices to verify that
		$\bigl(b_\rho\otimes\Bl\bigr)\cap C(b_\rho\otimes b)=b_\rho\otimes\mathcal{C}_b$.
		Assume that $c\in\mathcal{C}_b$.
		It follows from Proposition~\ref{prop:eimax-to-chkw} that
		$$\te_{w_\circ}^{\max}(b_\rho\otimes c)=b_\rho\otimes\chke_{w_\circ}(c)=b_\rho\otimes\chke_{w_\circ}(b).$$
		Hence, $b_\rho\otimes\mathcal{C}_b\subseteq\bigl(b_\rho\otimes\Bl\bigr)\cap C(b_\rho\otimes b)$.
		It suffices to show that $b_\rho\otimes d\notin C(b_\rho\otimes b)$ for all $d\in\Bl\setminus\mathcal{C}_b$.
		Suppose, for a contradiction, that 
		$b_\rho\otimes d\in C(b_\rho\otimes b)$.
		Because both $\te_{w_\circ}^{\max}(b_\rho\otimes d)$ and $\te_{w_\circ}^{\max}(b_\rho\otimes b)$ are highest elements in $C(b_\rho\otimes b)$,
		we have $b_\rho\otimes \chke_{w_\circ}(d)=\te_{w_\circ}^{\max}(b_\rho\otimes d)=\te_{w_\circ}^{\max}(b_\rho\otimes b)=b_\rho\otimes \chke_{w_\circ}(b)$.
		Hence we get $\chke_{w_\circ}(d)=\chke_{w_\circ}(b)$,
		and hence $d\in \mathcal{C}_b$,
		which is a contradiction.
		Therefore, we obtain $\bigl(b_\rho\otimes\Bl\bigr)\cap C(b_\rho\otimes b)=b_\rho\otimes\mathcal{C}_b$,
		which completes the proof.
	\end{proof}

	\begin{corollary}
		Let $E\in\mathbb{E}_\lambda$,
		and $w\in W^\lambda$.
		Then, $b_\rho\otimes E$ and $b_\rho\otimes \mathcal{A}_w(\lambda)$ decompose into disjoint unions of Demazure crystals,
		in the sense that $b_\rho\otimes X=\bigsqcup_{b\in X\cap\Tfix}b_\rho\otimes\mathcal{C}_b$ for $X=E,\ \mathcal{A}_w(\lambda)$,
		where each summand $b_\rho\otimes\mathcal{C}_b$ is identified, via the isomorphism $\Psi_b$ of
		Proposition~\ref{prop:brho-C_b}, with a Demazure crystal $\B_{w}(\mu)\subseteq\B(\mu)$ for some $w\in W$
		and $\mu\in P^+$ depending on $b$.
	\end{corollary}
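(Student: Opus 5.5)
The statement to prove is the corollary: for $X=E\in\mathbb{E}_\lambda$ or $X=\mathcal{A}_w(\lambda)$, the set $b_\rho\otimes X$ is a disjoint union of the sets $b_\rho\otimes\mathcal{C}_b$, $b\in X\cap\Tfix$, and each of these is identified via $\Psi_b$ with a Demazure crystal. The plan is simply to combine Proposition~\ref{prop:decomp-into-classes} with Proposition~\ref{prop:brho-C_b}. No new combinatorics should be needed.

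\textbf{Step 1: the decomposition.} By Proposition~\ref{prop:decomp-into-classes}, for either choice of $X$ we have $X=\bigsqcup_{b\in X\cap\Tfix}\mathcal{C}_b$. The map $\Bl\to b_\rho\otimes\Bl$, $c\mapsto b_\rho\otimes c$, is a bijection. Applying it termwise gives
\[
b_\rho\otimes X=\bigsqcup_{b\in X\cap\Tfix}b_\rho\otimes\mathcal{C}_b .
\]
Disjointness is preserved because the map is injective.

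\textbf{Step 2: each summand is a Demazure crystal.} For each $b\in X\cap\Tfix$, Proposition~\ref{prop:brho-C_b} gives $\Psi_b(b_\rho\otimes\mathcal{C}_b)=\B_{w_b}(\mu_b)$ for some $w_b\in W$. Here $\mu_b=\wt(b_\rho\otimes\chke_{\wo}(b))=\rho+\wt(b)\in P^+$, using $\chke_{\wo}(b)=b$ for $b\in\Tfix$. This identifies each summand with a Demazure crystal inside $\B(\mu_b)$, which is exactly the meaning of the decomposition claimed in the statement.

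It is also worth noting that distinct summands lie in distinct connected components of $\B(\rho)\otimes\Bl$. Indeed, by the proof of Proposition~\ref{prop:brho-C_b}, $(b_\rho\otimes\Bl)\cap C(b_\rho\otimes b)=b_\rho\otimes\mathcal{C}_b$. So the isomorphisms $\Psi_b$ live on pairwise different components, and the decomposition is compatible with the component decomposition of the tensor product.

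I expect no real obstacle here. The only points requiring care are that the index set $X\cap\Tfix$ matches the one in Proposition~\ref{prop:decomp-into-classes}, and that the $w$ appearing in the conclusion depends on $b$; it is not the $w$ indexing the atom $\mathcal{A}_w(\lambda)$. I would rename it $w_b$ to avoid the notational clash.
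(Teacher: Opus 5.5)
Your proposal is correct and takes the paper's own route: the paper likewise deduces the corollary by combining Proposition~\ref{prop:decomp-into-classes} (the decomposition of $X$ into the classes $\mathcal{C}_b$, $b\in X\cap\Tfix$) with Proposition~\ref{prop:brho-C_b} (each $b_\rho\otimes\mathcal{C}_b$ is identified via $\Psi_b$ with a Demazure crystal). Your additional remarks are accurate and go slightly beyond what the paper spells out: that $\mu_b=\rho+\wt(b)$ for $b\in\Tfix$, that distinct summands lie in distinct connected components, and that the $w$ in the conclusion should be renamed $w_b$.
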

	\begin{proof}
		We deduce this from Propositions~\ref{prop:decomp-into-classes} and \ref{prop:brho-C_b}.
	\end{proof}
	
	\section*{Acknowledgements}
	The author would like to express sincere gratitude to Professor Daisuke Sagaki
	for invaluable guidance and unfailing support throughout this work.
	The author was partly supported by JST SPRING, Grant Number JPMJSP2124.
	
	\renewcommand{\refname}{References}

\end{document}